\RequirePackage{fix-cm}
\documentclass[smallextended]{svjour3}       
\smartqed  
\usepackage{graphicx}
%
%
\usepackage[round,authoryear]{natbib}
\usepackage{amssymb}
\usepackage{amsmath}
\usepackage{amsbsy}
\usepackage{theorem}
\usepackage{graphicx}
\usepackage{epstopdf}
\usepackage{caption}
%
%
\journalname{TEST}
\begin{document}
\bibliographystyle{plainnat}
\title{On a class of minimum contrast estimators for Gegenbauer random fields}


\author{Rosa M. Espejo\and
        Nikolai N. Leonenko \and Andriy  Olenko \and Mar\'{\i}a D. Ruiz-Medina
}


\institute{R.M. Espejo \at
Department of Statistics and Operation Research,  Faculty of Sciences, University of Granada,
Spain\\
              \email{rosaespejo@ugr.es}        \and
N.N. Leonenko \at
Cardiff School of Mathematics, University of Cardiff, United Kingdom\\
\email{LeonenkoN@cardiff.ac.uk} \and 
A. Olenko \at
Department of Mathematics and Statistics, La Trobe University, Australia\\ 
Tel.: +613-94792609\\
              Fax: +613-94792466\\
\email{a.olenko@latrobe.edu.au} \and
M.D. Ruiz-Medina \at
Department of Statistics and Operation Research,  Faculty of Sciences, University of Granada,
Spain\\
\email{mruiz@ugr.es} }

\date{Received: date / Accepted: date}

\maketitle

\begin{abstract}
The article introduces spatial long-range dependent models based on the fractional difference operators associated with the Gegenbauer polynomials.
The results on consistency and asymptotic normality of a class of minimum
contrast estimators of long-range dependence parameters of the models are obtained. A methodology to verify assumptions for consistency and asymptotic normality of minimum contrast estimators is developed. Numerical results are presented to confirm the theoretical findings. 
\keywords{Gegenbauer random field\and long-range dependence\and minimum contrast estimator\and  consistency\and asymptotic normality}
\subclass{62F12\and 62M30\and 60G60\and 60G15}
\end{abstract}
\clearpage
\setcounter{page}{1}

\section{Introduction}
Among the extensive literature on long-range dependence, relatively few publications are devoted to cyclical long-memory processes or long-range dependent random fields. However, models with singularities at non-zero frequencies are of great importance in applications. For example, many time series show cyclical/seasonal evolutions. Singularities at non-zero frequencies produces peaks in the spectral density whose locations define periods of the cycles.  A survey of some recent asymptotic results for cyclical  long-range dependent random processes and  fields can be found in \citet{Iva:2013}  and \citet{Ole:2013}. 

In image analysis popular isotropic spatial processes with singularities of the spectral density at non-zero frequencies are wave, $J$-Bessel, and Gegenbauer models. \citet{Espejo:2014} investigated  probabilistic properties of spatial Gegenbauer models.
A realization of the Gegenbauer random field on $100\times100$ grid is shown in Figure~\ref{fig1}.\vspace{-0.7cm}
\begin{figure}[htbp]
  \begin{center}
  \includegraphics[width=0.6\textwidth,trim=20 10 0 10, clip=true]{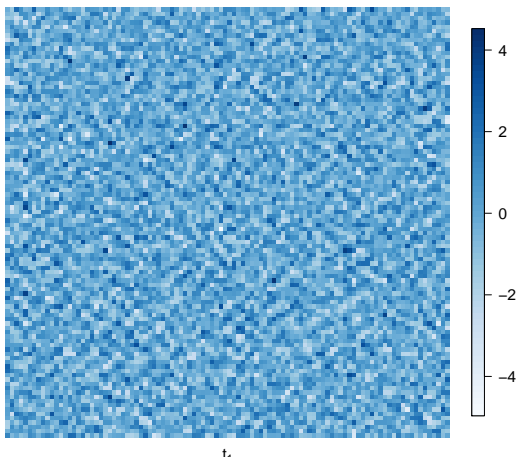}  
\caption{Simulated realization of the Gegenbauer random  field.}\vspace{-0.7cm} \label{fig1}
  \end{center}
\end{figure} 

This article studies minimum contrast estimators (MCEs) of parameters of the Gegenbauer random  fields. The MCE methodology has been widely applied in different statistical
frameworks (see, for example, \citet{Anh:2004,Anh:2007};
\citet{WeiLin:2012}). One of the first works which used a
MCE methodology for the parameter estimation of spectral
densities of stationary processes was the paper by \citet{Taniguchi:1987}.
\citet{Guyon:1995} introduced  a class of MCEs for
random fields. \citet{Anh:2004} derived consistency
and asymptotic normality of a class of MCEs
for stationary processes within the class of fractional Riesz-Bessel
motion (see \citet{Anh:1999}). Results based
on the second and third-order cumulant spectra were given by \citet{Anh:2007}. They also provided asymptotic properties
of second and third-order sample spectral functionals. These properties are of independent interest, since they
can be applied to study the limiting properties of
nonparametric estimators of processes with short or long-range
dependence.  \citet{WeiLin:2012} applied the minimum
contrast parameter estimation to approximate the drift parameter
of the Ornstein-Uhlenbeck process, when the corresponding stochastic
differential equation is driven by the fractional Brownian motion with a
specific Hurst index.

A burgeoning literature on spatio-temporal estimation has emerged in recent decades 
(see \citet{Beran:2009}, \citet{Chan:2012}, \citet{Giraitis:2001}, \citet{Guo:2009}, \citet{Li:1986}, \citet{Reisen:2006}, among
others). One of the most popular estimation tools applied was the
maximum likelihood estimation method~(MLE).  \citet{Reisen:2006}
addressed the problem of parameter estimation   of fractionally
integrated processes with seasonal components. In order to estimate
the fractional parameters, they propose several log-periodogram
regression estimators with different bandwidths selected
around and/or between the seasonal frequencies. The same methodology
was used by \citet{Li:1986} for fractionally differenced
autoregressive-moving average processes in the stationary time
series context. Several contributions have also been made for MLE of long
memory spatial processes  (see, for example,  \citet{Anh:1995}).  For
two-dimensional spatial data the paper by \citet{Basu:1993}   introduced a spatial unilateral first-order autoregressive
moving average (ARMA)  model. To  implement
 MLE they provided a proper treatment to
border cell values with a substantial effect in estimation of
parameters. \citet{Beran:2009} addressed the problem of the
least-squares estimation of  autoregressive fractionally integrated
moving-average (FARIMA) processes with long-memory. \citet{Cohen:2002} investigated asymptotic properties of least-squares
estimators in  regression models for two-dimensional random fields.
Maximization of the Whittle likelihood has been also considered in the
recent  literature on the MCE (see
for example, \citet{Chan:2012}, \citet{Boissy:2005}, \citet{Leonenko:2006}).  \citet{Leonenko:2006} gave a continuous version of the Whittle contrast
functional supplied with a specific weight function for the
estimation of continuous-parameter stochastic processes, deriving
the  consistency and asymptotic normality of such estimators. \citet{Guo:2009} demonstrated that the Whittle maximum likelihood
estimator is consistent and asymptotically normal for stationary
seasonal autoregressive fractionally integrated moving-average
(SARFIMA) processes.

Parameter estimation of stationary  Gegenbauer
random processes was considered by numerous authors, see, for example, \citet{Gray:1989}, \citet{Chung:1996a,Chung:1996b}, \citet{Woodward:1998}, \citet{Collet:2006}, \citet{McElroy:2012}. \citet{Gray:1989} used  the generating function
of the Gegenbauer polynomials to develop long
memory Gegenbauer autoregressive moving-average (GARMA) models that
generalize the FARIMA process.  GARMA models were
estimated by applying the MLE methodology. \citet{Chung:1996a}
also applied this methodology with slight modifications based on
the conditional sum of squares method. \citet{Chung:1996b} extended
these results to the two-parameter context within the GARMA process
class. \citet{Woodward:1998} introduced a $k$-factor extension of the
GARMA model that allowed to  associate the long-memory behavior 
with each one of the $k$ Gegenbauer frequencies involved.

In this article we restrict our consideration to the estimation of long-range dependence parameters. It is motivated in part by cyclic processes, for which pole locations are known. Also in some applications the spectral density singularity location can be estimated in advance. Various methods, including semiparametric, wavelet, and pseudo-maximum likelihood techniques, of the estimation of a singularity location were discussed by, for example,  \citet{Arteche:2000}, \citet{Giraitis:2001}, and \citet{Ferrara:2001}.

This paper introduces and studies the MCE of parameters of spatial
Gegenbauer  processes. Specifically,  analogous of continuous-space results by \citet{Anh:2004} are formulated for random fields defined on integer grids. The consistency and asymptotic normality of the MCE are obtained using a spatial discrete version of the Ibragimov contrast function. The article develops a methodology to practically verify general theoretical assumptions for consistency and asymptotic normality of MCEs for specific models. The results provide a rigorous platform to conduct model selection and statistical inference.

The outline of the article is the following. In section 2, we start by introducing the main notations of the paper. Some fundamental definitions and the main
results of this article, Theorem 1 and 2, are given in \S 3. Section 4 consists of the proofs of
the  main results. Section 5 presents simulation studies which support the theoretical findings. The Appendix provides  auxiliary
materials that specify for our case the conditions that ensure the consistency and
asymptotic normality of the MCE based on the Ibragimov
contrast function
formulated in \citet{Anh:2004}. 

In what follows we use the symbol $C$ to denote constants which are not important for our discussion. Moreover, the same symbol $C$ may be used for different constants appearing in the same proof.

 All calculations in the article were performed using  the software {\it R} version~3.0.2 and {\it Maple 16,} Maplesoft.
 
\section{Gegenbauer random fields}
This section introduces some of the main definitions of the Gegenbauer random fields given in \citet{Espejo:2014}
(see also, \citet{Chung:1996a,Chung:1996b}, \citet{Gray:1989},  and \citet{Woodward:1998}, for the temporal case).

 Let $Y_{t_{1},t_{2}},$  $(t_{1},t_{2})\in
\mathbb{Z}^{2},$ be a  random field defined on the   grid lattice $\mathbb{Z}^{2}.$ Consider the fractional difference operator $\nabla^{d}_{u}$
defined by
\begin{align}\label{fog}
\nabla^{d}_{u}=(I-2uB+B^{2})^{d}=(1-2\cos\nu B+B^{2})^{d}=[(1-e^{i\nu}B)(1-e^{-i\nu}B)]^{d},
\end{align}
where $B$ is the backward-shift operator, $u=\cos \nu,$ i.e. $\nu=\arccos(u),$ $|u| \leq 1,$ and $d \in
\left(-\frac{1}{2}, \frac{1}{2}\right).$ Assume that $Y$ satisfies the following state
equation  
 \begin{equation}\label{eqGegenbauer}
\nabla^{d_{1}}_{u_{1}}\circ \nabla^{d_{2}}_{u_{2}}
Y_{t_{1},t_{2}}=\left(I-2u_{1}B_{1}+B_{1}^{2}\right)^{d_{1}} \circ
\left(I-2u_{2}B_{2}+B_{2}^{2}\right)^{d_{2}}
Y_{t_{1},t_{2}}=\varepsilon_{t_{1},t_{2}}, 
\end{equation}
  where $\nabla^{d_{i}}_{u_{i}},$ $i=1,2,$ is given by
equation (\ref{fog}), with  $B_{i},$ $i=1,2,$ denoting the
backward-shift operator for each spatial coordinate, i.e.
$B_{1}Y_{t_{1},t_{2}}=Y_{t_{1}-1,t_{2}}$, and
$B_{2}Y_{t_{1},t_{2}}=Y_{t_{1},t_{2}-1}.$ Here,
$\varepsilon_{t_{1},t_{2}},$ $(t_{1},t_{2})\in \mathbb{Z}^{2},$ is
a zero-mean white noise field with the common variance
$E[\varepsilon_{t_{1},t_{2}}^2]=\sigma^{2}_{\varepsilon}.$  The random field $Y$ is called a
spatial Gegenbauer white noise in \citet{Espejo:2014}.

By  equation
(\ref{eqGegenbauer}) the Gegenbauer random field $Y$ can be defined
in terms of the inverse of the operator $\nabla^{d_{1}}_{u_{1}}\circ
\nabla^{d_{2}}_{u_{2}}$ expanded in a Gegenbauer polynomial series
as follows 
\begin{align}
   Y_{t_{1},t_{2}}
        &= \nabla^{-d_{2}}_{u_{2}} \circ \nabla^{-d_{1}}_{u_{1}} \varepsilon_{t_{1},t_{2}}
        = \sum_{n_{1}=0}^{\infty} \sum_{n_{2}=0}^{\infty} \, C_{n_{1}}^{(d_{1})}(u_{1}) \, C_{n_{2}}^{(d_{2})}(u_{2}) B_{1}^{n_{1}} B_{2}^{n_{2}}
        \varepsilon_{t_{1},t_{2}} \nonumber\\
        &= \sum_{n_{1}=0}^{\infty} \sum_{n_{2}=0}^{\infty} \, C_{n_{1}}^{(d_{1})}(u_{1}) \,
        C_{n_{2}}^{(d_{2})}(u_{2}) \varepsilon_{t_{1}-n_{1},t_{2}-n_{2}}, \label{gegenb}
\end{align}
where $d_i \neq 0,$ $i=1,2,$ and $C_{n}^{(d)}(u)$ is the Gegenbauer polynomial given by \[C_{n}^{(d)}(u)=\sum_{k=0}^{[{n}/{2}]} (-1)^{k} \frac{(2u)^{n-2k}\Gamma(d-k+n)}{k!(n-2k)!\Gamma(d)}.\]
The generating function for the Gegenbauer polynomials is given by 
\[\sum_{n=0}^\infty C_{n}^{(d)}(u)b^n=\left(1-2u b+b^{2}\right)^{-d},\quad |b|<1, 
\]
which explains the expansion of  the inverse operator in (\ref{gegenb}). 

In general,  a  random field $Y$ is called invertible if the white noise $\varepsilon_{t_1,t_2},$  $(t_{1},t_{2})\in \mathbb{Z}^{2},$ can be expressed as the convergent sum 
\[\varepsilon_{t_1,t_2}= \sum_{n_{1}=0}^{\infty} \sum_{n_{2}=0}^{\infty}  b_{n_1,n_2}Y_{t_{1}-n_{1},t_{2}-n_{2}},\]
where $\sum_{n_{1}=0}^{\infty} \sum_{n_{2}=0}^{\infty} | b_{n_1,n_2}|<\infty.$

The Gegenbauer random field  has the following property, see \citet{Espejo:2014}.

\begin{proposition} \label{proposition} If   $0<d_{i}< \frac{1}{2}$ and $|u_{i}|<1,$ $i=1,2,$ then
$Y$ is a stationary invertible long range dependent random field. 
\end{proposition}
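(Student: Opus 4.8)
\noindent\emph{Proof proposal.} The idea is to realise $Y$ as the linear random field
\[
Y_{t_1,t_2}=\sum_{n_1,n_2\ge 0}\psi_{n_1,n_2}\,\varepsilon_{t_1-n_1,t_2-n_2},\qquad \psi_{n_1,n_2}=C_{n_1}^{(d_1)}(u_1)\,C_{n_2}^{(d_2)}(u_2),
\]
and to control all three properties through the classical Szeg\H{o} asymptotics of Gegenbauer polynomials away from $u=\pm1$. Writing $u_i=\cos\nu_i$ with $\nu_i\in(0,\pi)$, which is exactly where $|u_i|<1$ enters, one has for non-integer $d>0$
\[
C_n^{(d)}(\cos\nu)=\frac{2}{\Gamma(d)(2\sin\nu)^{d}}\,n^{d-1}\cos\!\Big((n+d)\nu-\frac{d\pi}{2}\Big)+O(n^{d-2}),
\]
so that $|C_n^{(d_i)}(u_i)|\le C\,n^{d_i-1}$. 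Since $\varepsilon$ is uncorrelated white noise, the defining double series converges in $L^2(\Omega)$ as soon as $\sum_{n_1,n_2}\psi_{n_1,n_2}^2<\infty$, and this holds because $\sum_{n_1,n_2}\psi_{n_1,n_2}^2\le C\big(\sum_{n_1}n_1^{2d_1-2}\big)\big(\sum_{n_2}n_2^{2d_2-2}\big)<\infty$ precisely when $d_i<\tfrac12$; stationarity is then automatic, the filter being shift invariant.

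For long-range dependence I would pass to the spectral density. From (\ref{gegenb}) the transfer function is $\prod_i(1-2u_ie^{-i\lambda_i}+e^{-2i\lambda_i})^{-d_i}$, so
\[
f_Y(\lambda_1,\lambda_2)=\frac{\sigma_\varepsilon^2}{(2\pi)^2}\prod_{i=1,2}\big|1-2u_ie^{-i\lambda_i}+e^{-2i\lambda_i}\big|^{-2d_i}.
\]
The factorisation $1-2ue^{-i\lambda}+e^{-2i\lambda}=(1-e^{-i(\lambda-\nu)})(1-e^{-i(\lambda+\nu)})$ shows that each factor has a simple zero at $\lambda_i=\pm\nu_i$, hence $f_Y$ blows up like $|\lambda_i\mp\nu_i|^{-2d_i}$ as $\lambda_i\to\pm\nu_i$: since $d_i>0$ this is a genuine pole — the hallmark of (seasonal/cyclic) long memory at the non-zero frequencies $\nu_i$ — while $d_i<\tfrac12$ keeps the pole integrable, consistent with $f_Y\in L^1$ and with the $L^2$ construction above. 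Equivalently, the same polynomial asymptotics give $\mathrm{Cov}(Y_{0,0},Y_{m_1,m_2})\asymp m_1^{2d_1-1}m_2^{2d_2-1}$ up to bounded oscillating factors, which is not absolutely summable.

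For invertibility, the generating function identity $(1-2ub+b^2)^{d}=\sum_{n\ge0}C_n^{(-d)}(u)b^n$ identifies the formal inverse of the operator in (\ref{eqGegenbauer}) as the filter with kernel $b_{n_1,n_2}=C_{n_1}^{(-d_1)}(u_1)C_{n_2}^{(-d_2)}(u_2)$. Invoking the Szeg\H{o} asymptotics with the non-integer negative parameter $-d_i\in(-\tfrac12,0)$ gives $|C_n^{(-d_i)}(u_i)|\le C\,n^{-d_i-1}$, so $\sum_{n_1,n_2}|b_{n_1,n_2}|\le C\big(\sum_{n_1}n_1^{-d_1-1}\big)\big(\sum_{n_2}n_2^{-d_2-1}\big)<\infty$; hence $\sum_{n_1,n_2}b_{n_1,n_2}Y_{t_1-n_1,t_2-n_2}$ converges absolutely and, the composition of the $\psi$-filter and the $b$-filter being the identity as a formal power series (a rearrangement legitimised by the absolute convergence), it equals $\varepsilon_{t_1,t_2}$. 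The main obstacle, and the place where care is really needed, is a clean justification of the Gegenbauer asymptotics uniformly for $u$ in compact subsets of $(-1,1)$ and for \emph{both} signs of the non-integer parameter (the standard references state the expansion for positive parameter only), since everything reduces to whether the exponents $n^{d_i-1}$ and $n^{-d_i-1}$ fall on the correct side of the $\ell^2$ and $\ell^1$ thresholds — which is exactly the content of the hypothesis $0<d_i<\tfrac12$.
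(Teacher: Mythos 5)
The paper does not actually prove Proposition~1 --- it imports it wholesale from Espejo, Leonenko and Ruiz--Medina (2014) --- so there is no in-paper argument to compare against line by line. Your proposal is the standard proof and its three pillars are all sound: the Darboux--Szeg\H{o} asymptotics $C_n^{(d)}(\cos\nu)\sim C\,n^{d-1}\cos((n+d)\nu-d\pi/2)$ for $\nu\in(0,\pi)$ give $\ell^2$-summability of the MA coefficients exactly for $d_i<\tfrac12$ (stationarity), the pole of order $2d_i<1$ at $\lambda_i=\pm\nu_i$ together with the covariance decay $j_i^{2d_i-1}\cos(j_i\nu_i)$ gives non-summability of $\gamma$ for $d_i>0$ (this matches the asymptotic expansion of $\gamma$ quoted in Section~2), and $\ell^1$-summability of the coefficients of $(1-2u_ib+b^2)^{d_i}$ gives invertibility. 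Two spots deserve tightening. First, your worry about Szeg\H{o} asymptotics for the negative parameter $-d_i\in(-\tfrac12,0)$ is legitimate but avoidable: factor $(1-2u_ib+b^2)^{d_i}=(1-e^{i\nu_i}b)^{d_i}(1-e^{-i\nu_i}b)^{d_i}$, expand each factor by the binomial series with coefficients $O(k^{-d_i-1})$, and use that the convolution of two absolutely summable sequences is absolutely summable; no negative-parameter Gegenbauer asymptotics are then needed. Second, the phrase ``a rearrangement legitimised by the absolute convergence'' is too quick: the inner filter $\psi$ is only $\ell^2$ (indeed $\sum_n n^{d_i-1}=\infty$ for $d_i>0$), so the double series $\sum_{n}b_n\sum_m\psi_m\varepsilon_{t-n-m}$ is not absolutely convergent and cannot be rearranged termwise. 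The clean fix is to pass to the spectral domain: $b\in\ell^1$ makes its transfer function $B(\boldsymbol{\lambda})$ continuous, the spectral density of $\sum_{\boldsymbol n}b_{\boldsymbol n}Y_{\boldsymbol t-\boldsymbol n}-\varepsilon_{\boldsymbol t}$ equals $\bigl|B(\boldsymbol{\lambda})\prod_i(1-2u_ie^{-i\lambda_i}+e^{-2i\lambda_i})^{-d_i}-1\bigr|^2\sigma_\varepsilon^2/(2\pi)^2=0$ a.e., and the identification follows. With those two repairs the argument is complete and consistent with all the formulas the paper states without proof.
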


The  spectral density of a  stationary Gegenbauer random field 
is given by \citet{Chung:1996a,Chung:1996b}, and \citet{Hsu:2009}, for the one-parameter case, and
\citet{Espejo:2014}, for the two-parameter case:
\begin{align}
    f(\boldsymbol{\lambda},\boldsymbol{\theta})
    &= \frac{\sigma^{2}_{\varepsilon}}{(2\pi)^{2}} \,
    \left| 1-2u_{1}e^{-i\lambda_{1}}+e^{-2i\lambda_{1}} \right|^{-2d_{1}}
    \left| 1-2u_{2}e^{-i\lambda_{2}}+e^{-2i\lambda_{2}} \right|^{-2d_{2}} \nonumber\\
    &= \frac{\sigma^{2}_{\varepsilon}}{(2\pi)^{2}} \,
    \left\{\left|2\cos\lambda_{1}-2u_{1}\right|\right\}^{-2d_{1}} \,
    \left\{\left|2\cos\lambda_{2}-2u_{2}\right|\right\}^{-2d_{2}},
     \label{spectral_density}
\end{align}
  where $\boldsymbol{\theta}=(\boldsymbol{u},\boldsymbol{d})=(u_{1},u_{2},d_{1},d_{2}) \in \Theta=(-1,1)^{2}\times(0,1/2)^{2},$ $u_{i}=\cos \nu_{i}$, and $-\pi \leq \lambda_{i}
\leq \pi,$ $i=1,2.$ Using the spectral density function
(\ref{spectral_density}), one can compute the auto-covariance
function of $Y$ as follows:
\[ \gamma(j_{1},j_{2},\boldsymbol{\theta})= \frac{\sigma^{2}_{\varepsilon}}{4\pi}
    \prod_{i=1}^{2} \Gamma(1-2d_{i}) [2\sin(\nu_{i})]^{\frac{1}{2}-2d_{1}}
    \left[ P_{j_{i}-\frac{1}{2}}^{2d_{i}-\frac{1}{2}} (u_{i}) + (-1)^{j_{i}} P_{j_{i}-\frac{1}{2}}^{2d_{i}-\frac{1}{2}} (-u_{i}) \right],
\]
where $P_{a}^{b}(z)$ is the associated Legendre function of the first kind, consult \S 8 in \citet{Abra:1972}.

  From \citet{Chung:1996a,Chung:1996b}, \citet{Gray:1989} and
\citet{Gradshteyn:1980},  the following asymptotic
approximation of the autocovariance function can be obtained
\begin{align}
    \gamma(j_{1},j_{2},\boldsymbol{\theta})&= \prod_{i=1}^{2} \frac{2^{1-2d_{i}}\sigma^{2}_{\epsilon}}{\pi\sin^{2d_{i}}(\nu_{i})}
      \sin(d_{i}\pi) \Gamma(1-2d_{i}) \cos(j_{i} \nu_{i})\frac{\Gamma(j_{i}+2d_{i})}{\Gamma(j_{i}+1)} [1+\mathcal{O}(j_{i}^{-1})]. \nonumber
\end{align}

The random field $Y$ is long range dependent as its auto-covariance function satisfies the condition
$\sum_{(j_{1},j_{2})\in\mathbb{Z}^{2}}|\gamma(j_{1},j_{2},\boldsymbol{\theta})|= +\infty.$
A detailed discussion on relations between local specifications
of  spectral functions and the tail behaviour
of auto-covariance functions of long range dependent random fields can be found in \cite{LeoOle:2013}.

Figure~\ref{fig2} gives an example of the spectral density and the auto-covariance
function of the Gegenbauer random field for the values of the parameters  $u_1=0.4,$ $u_2= 0.3,$ $d_1= 0.2,$ and $d_2= 0.3.$\vspace{-0.7cm}
\begin{figure}[h]
  \begin{center}
\begin{minipage}{5cm}
\includegraphics[width=6cm, trim=30 0 0 80, clip=true]{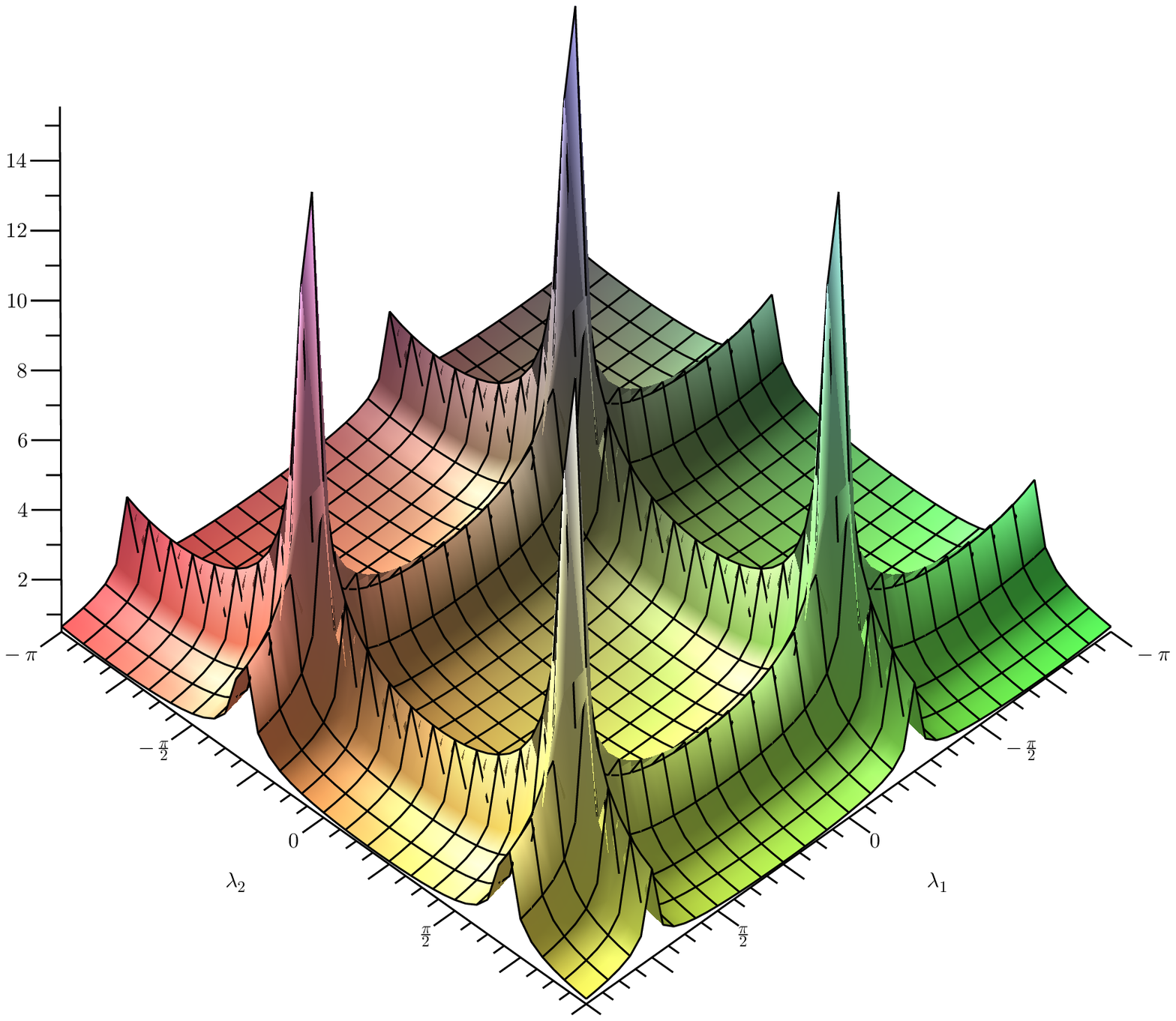} 
 \end{minipage}\quad\quad
\begin{minipage}{5cm}
\includegraphics[width=6cm, trim=30 0 0 80, clip=true]{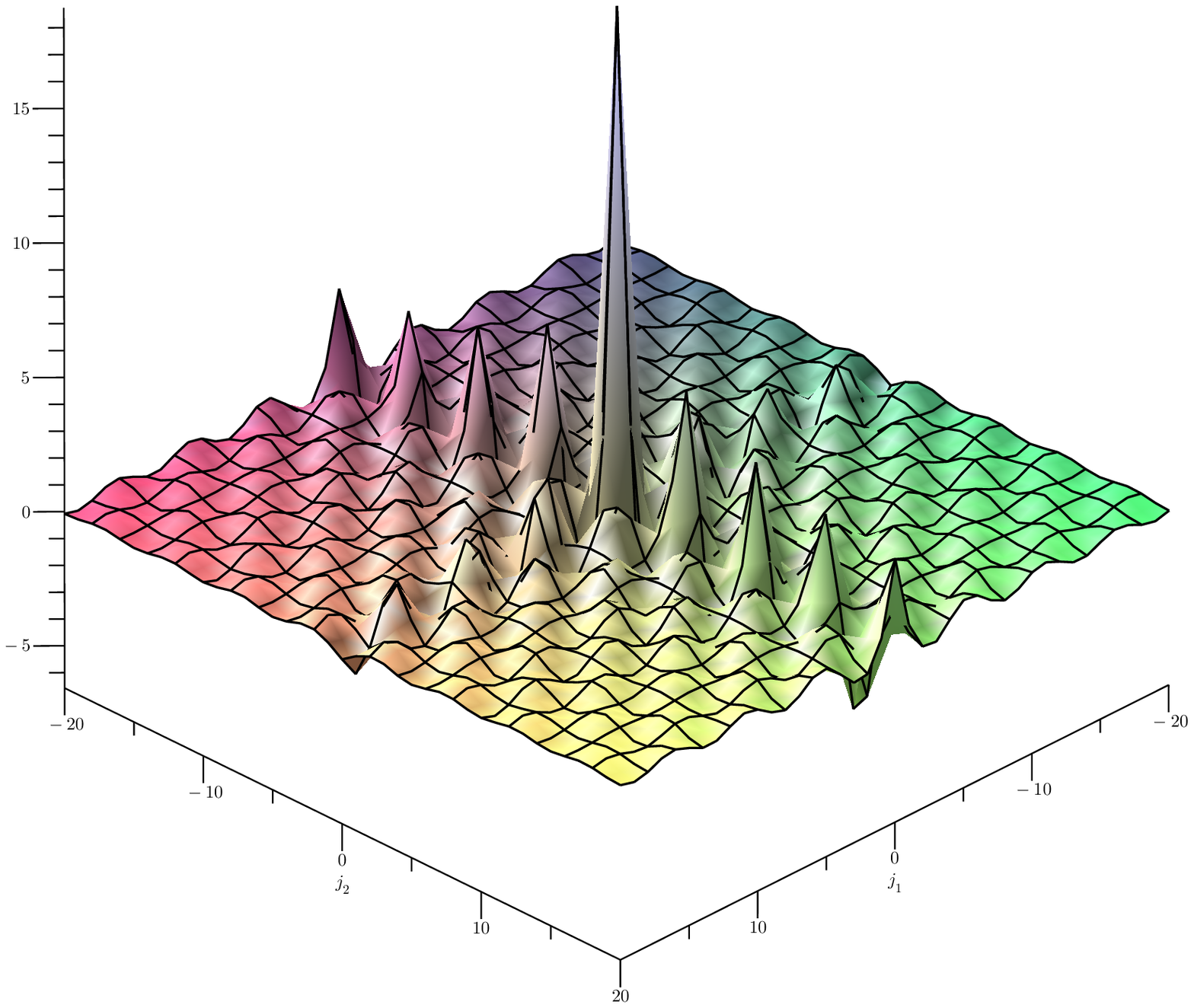} \end{minipage}
\vspace{-0.5cm}\caption{Spectral density and auto-covariance
function for  $\boldsymbol{u}=(0.4,0.3)$ and $\boldsymbol{d}= (0.2, 0.3)$\vspace{-1cm}} \label{fig2}
  \end{center}
\end{figure}
\section{Asymptotic properties of MCEs}
Very detailed discussions about estimation of parameters of seasonal/cyclical long memory time series were given in \cite{Arteche:2000},\cite{Giraitis:2001}, \cite{Ferrara:2001}, and references therein. It was shown that the case of spectral singularities outside the origin is much more difficult comparing to the situation of a spectral pole at 0. In particular, estimators of parameters $u_i$ and $d_i$ have different rates of convergence and their joint distributions are still unknown
under general conditions.   

\cite{Ferrara:2001} pointed out that, in practice, parameter estimation of seasonal/cyclical long memory data is done in two steps. The first step consists in estimation of singularity locations, which does not represent a difficult issue.  Then,  the obtained values of location parameters are used in estimators of the long memory parameters. It has been shown that this 2-steps method provides quasi-similar results to simultaneous procedures. Similarly, the results of this paper may also be used in a hierarchical modeling framework. Namely, locations of the singularities can be included in the first level of the hierarchical procedure.  Then, on the second step of the hierarchical procedure, the long-range dependence parameters can be estimated by the MCE method, conditioning on the locations obtained on the first step.  It also allows to construct a suitable weight function before applying the MCE methodology.  Therefore, in this paper we only concentrate our attention on long-range dependence  parameters.

Suppose  that  the conditions imposed
 in Proposition~\ref{proposition} to ensure stationarity, invertibility  and long-range  dependence hold. Assume  the value of the parameter $\boldsymbol{u}=(u_{1},u_{2})$ is known a priori or was estimated on step 1. Then $\boldsymbol{\theta}=(\theta_{1},\theta_{2})=(d_{1},d_{2})\in \Theta=(0,1/2)^{2}$
 is the vector of parameters to estimate of the Gegenbauer random field defined by equation~(\ref{eqGegenbauer}) (see also equation~(\ref{spectral_density}) for the corresponding spectral density). 
 
Let $Y_{t_{1},t_{2}},$ $t_{1},t_{2}=0,...,T,$ be a part of a realization of the Gegenbauer random field.
 
Let  $w(\boldsymbol{\lambda})$,  $\boldsymbol{\lambda} \in [-\pi,\pi]^{2}$, be a nonnegative function. Suppose the condition \textbf{A3} in the Appendix holds. We define
 \begin{equation}\label{eq_sigma_factorization}
 \sigma^{2} (\boldsymbol{\theta})=\displaystyle\int_{[-\pi,\pi]^{2}}
     f(\boldsymbol{\lambda},\boldsymbol{\theta})w(\boldsymbol{\lambda}) \,\, d\boldsymbol{\lambda}
 \end{equation}
   and consider the factorization
 \begin{equation} \label{equation_spectraldensity}
     f(\boldsymbol{\lambda},\boldsymbol{\theta})= \sigma^{2} (\boldsymbol{\theta}) \Psi(\boldsymbol{\lambda}, \boldsymbol{\theta}).
 \end{equation}
For all $\boldsymbol{\theta }\in \Theta$ the function
 $\Psi(\boldsymbol{\lambda},\boldsymbol{\theta})$
 has the following property
 \begin{equation}\label{equation_psi}
 \displaystyle\int_{[-\pi,\pi]^{2}}
      \Psi(\boldsymbol{\lambda},\boldsymbol{\theta}) w(\boldsymbol{\lambda}) \, \, d\boldsymbol{\lambda} =1.
 \end{equation}

Let $K(\boldsymbol{\theta}_{0},\boldsymbol{\theta})$
be a non-random real-valued function, usually referred as the
\emph{contrast function}, given~by
\[
K(\boldsymbol{\theta_{0}},\boldsymbol{\theta}):=\displaystyle\int_{[-\pi,\pi]^{2}}
f(\boldsymbol{\lambda},\boldsymbol{\theta_{0}})
w(\boldsymbol{\lambda}) \log
\frac{\Psi(\boldsymbol{\lambda},\boldsymbol{\theta_{0}})}
{\Psi(\boldsymbol{\lambda},\boldsymbol{\theta})}  \,\,
d\boldsymbol{\lambda},
\]
  and let the \emph{contrast field} be
\[
    U(\boldsymbol{\theta}):=-\displaystyle\int_{[-\pi,\pi]^{2}}
     f(\boldsymbol{\lambda},\boldsymbol{\theta_{0}}) w(\boldsymbol{\lambda})
     \log \Psi(\boldsymbol{\lambda},\boldsymbol{\theta}) \,\, d\boldsymbol{\lambda},
\]
where 
$\boldsymbol{\theta_{0}}=(d_{10},d_{20})$ is the true parameter value. In what follows, $P_{0}$ denotes the probability distribution with the density function $f(\boldsymbol{\lambda},\boldsymbol{\theta}_0).$

The empirical version $\hat{U}_{T}(\boldsymbol{\theta}),$  $T \in
\mathbb{Z},$ $\boldsymbol{\theta} \in \Theta,$ is defined by
\begin{equation}\label{equation_Ut}
    \hat{U}_{T}(\boldsymbol{\theta}):=-\displaystyle\int_{[-\pi,\pi]^{2}}
     I_{T}(\boldsymbol{\lambda}) w(\boldsymbol{\lambda}) \log\Psi(\boldsymbol{\lambda},\boldsymbol{\theta}) \,\, d\boldsymbol{\lambda},
\end{equation}
  where $I_{T}(\boldsymbol{\lambda})$ is the periodogram of the observations $Y_{t_{1},t_{2}},$ $t_{1},t_{2}=0,...,T,$ of the Gegenbauer random field, that is,
\[
I_{T}(\boldsymbol{\lambda}):= \frac{1}{(2\pi T)^{2}} \left|
 \displaystyle\sum_{t_1=0}^T\sum_{t_2=0}^T e^{-i(t_{1}\lambda_{1}+t_{2}\lambda_{2})} Y_{t_{1},t_{2}} \right|^{2}.
 \]
 
 The MCE is defined by the empirical
 contrast field $\hat{U}_{T}(\boldsymbol{\theta})$   and the contrast
 function $K(\boldsymbol{\theta}_{0},\boldsymbol{\theta})$ being
 $K(\boldsymbol{\theta}_{0},\boldsymbol{\theta})\geq 0,$ and having a
 unique minimum at $\boldsymbol{\theta}=\boldsymbol{\theta}_{0}.$

In particular, we choose
\begin{equation}\label{weightfunction}
   w(\boldsymbol{\lambda})= |\lambda_1^2-\nu_1^2|^{a_1}|\lambda_2^2-\nu_2^2|^{a_2}w_0(\boldsymbol{\lambda}),\quad
   \boldsymbol{\lambda}=(\lambda_{1},\lambda_{2})\in [-\pi,\pi]^{2},
   \end{equation}
   where $a_i>1,$ $i=1,2,$ $w_0(\boldsymbol{\lambda})$ is a positive function with continuous second order derivatives on $[-\pi,\pi]^{2}.$ 

Notice that by (\ref{weightfunction})  the weight function $w(\boldsymbol{\lambda})$ is    nonnegative and symmetric about $(0,0).$ Due to the boundedness of $w(\boldsymbol{\lambda})$ the product  $w(\boldsymbol{\lambda}) f(\boldsymbol{\lambda},\boldsymbol{\theta})$ is integrable. Thus,  the choice of  $w(\boldsymbol{\lambda})$ fulfils condition~{\rm \bf A3} in the Appendix.
The developed methodology is readily adjustable to other classes of weight functions.

 Theorems~\ref{Theorem_1} and \ref{Theorem_2} give 
consistency and asymptotic normality results for the MCE.

\begin{theorem} \label{Theorem_1}
Let $Y_{t_{1},t_{2}},\ (t_{1},t_{2})\in \mathbb{Z}^{2},$ be a
stationary Gegenbauer random field which spectral density satisfies equation {\rm(\ref{spectral_density})}.
If $\hat{U}_{T}(\boldsymbol{\theta})$ is the empirical contrast field 
defined by equation~{\rm(\ref{equation_Ut})}, then
\begin{itemize}
\item $Y_{t_{1},t_{2}}$ satisfies the conditions
{\rm \bf A1-A6} in the Appendix;
\item
 the minimum
contrast estimator 
$\hat{\boldsymbol{\theta}}_{T}= (\hat{d}_{1},\hat{d}_{2})= arg \min_{\boldsymbol{\theta}
\in \Theta} \, \hat{U}_{T}(\boldsymbol{\boldsymbol{\theta}})\in \Theta$
 is a consistent estimator of
the parameter vector $\boldsymbol{\theta}.$ That is, there is a convergence in $P_{0}$ probability:
\[\hat{\boldsymbol{\theta}}_{T}\overset{P_{0}}{\longrightarrow}
\boldsymbol{\theta_{0}}, \quad
T\longrightarrow \infty;\]  
\item $\hat{\sigma}^{2}_{T}\overset{P_{0}}{\longrightarrow}
\sigma^{2}(\boldsymbol{\theta_{0}}),\ T\longrightarrow \infty,$
where the variance estimator  $\hat{\sigma}^{2}_{T}$ is given by
$$\hat{\sigma}^{2}_{T}=\int_{[-\pi, \pi]^2} I_{T}(\boldsymbol{\lambda})w(\boldsymbol{\lambda})\,\, d\boldsymbol{\lambda}.$$
\end{itemize}
\end{theorem}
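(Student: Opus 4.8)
The plan is to deduce all three assertions from the general consistency theorem for Ibragimov-contrast minimum contrast estimators stated in the Appendix (the discrete-grid counterpart of \citet{Anh:2004}), so that the substance of the proof is the verification of conditions \textbf{A1}--\textbf{A6} for the Gegenbauer spectral density \eqref{spectral_density} paired with the weight \eqref{weightfunction}. I would check them in turn. Compactness of the parameter set and interiority of $\boldsymbol{\theta}_0$ (condition \textbf{A1}) are immediate, since we work on a closed subrectangle of $\Theta=(0,1/2)^2$ containing $\boldsymbol{\theta}_0$; joint measurability in $\boldsymbol{\lambda}$ and continuity in $\boldsymbol{\theta}$ of $f(\boldsymbol{\lambda},\boldsymbol{\theta})$ away from the singularity hyperplanes $\lambda_i=\pm\nu_i$ (condition \textbf{A2}) are evident from the explicit product form. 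Condition \textbf{A3}, the integrability of $w(\boldsymbol{\lambda})f(\boldsymbol{\lambda},\boldsymbol{\theta})$, has already been remarked to hold: near $\lambda_i=\pm\nu_i$ the factor $|2\cos\lambda_i-2u_i|^{-2d_i}$ has order $|\lambda_i\mp\nu_i|^{-2d_i}$ with $2d_i<1$, and this is dominated by the vanishing factor $|\lambda_i^2-\nu_i^2|^{a_i}$ because $a_i>1$; the same estimate shows that $\sigma^2(\boldsymbol{\theta})$ in \eqref{eq_sigma_factorization} is finite and strictly positive, so the normalized factor $\Psi(\boldsymbol{\lambda},\boldsymbol{\theta})$ from \eqref{equation_spectraldensity}--\eqref{equation_psi} is well defined.

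Next comes the identifiability/uniqueness part, which I attribute to condition \textbf{A4}: using $f(\boldsymbol{\lambda},\boldsymbol{\theta}_0)=\sigma^2(\boldsymbol{\theta}_0)\Psi(\boldsymbol{\lambda},\boldsymbol{\theta}_0)$ one may write $K(\boldsymbol{\theta}_0,\boldsymbol{\theta})=\sigma^2(\boldsymbol{\theta}_0)\int_{[-\pi,\pi]^2}\Psi(\boldsymbol{\lambda},\boldsymbol{\theta}_0)w(\boldsymbol{\lambda})\log\frac{\Psi(\boldsymbol{\lambda},\boldsymbol{\theta}_0)}{\Psi(\boldsymbol{\lambda},\boldsymbol{\theta})}\,d\boldsymbol{\lambda}$, and, by \eqref{equation_psi}, recognize the integral as the Kullback--Leibler divergence between the probability measures with densities $\Psi(\cdot,\boldsymbol{\theta}_0)$ and $\Psi(\cdot,\boldsymbol{\theta})$ with respect to $w(\boldsymbol{\lambda})\,d\boldsymbol{\lambda}$; Jensen's inequality then gives $K\ge 0$ with equality iff $\Psi(\cdot,\boldsymbol{\theta})=\Psi(\cdot,\boldsymbol{\theta}_0)$ a.e., and comparing the orders of the poles at $\lambda_i=\pm\nu_i$ forces $d_i=d_{i0}$, so the minimum is unique. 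The required smoothness of $\log\Psi$ in $\boldsymbol{\theta}$ (condition \textbf{A4}, and its second-order strengthening in \textbf{A6}) follows by differentiating under the integral sign: $\partial_{d_i}\log\Psi(\boldsymbol{\lambda},\boldsymbol{\theta})=-2\log|2\cos\lambda_i-2u_i|-\partial_{d_i}\log\sigma^2(\boldsymbol{\theta})$, and higher derivatives only introduce further logarithmic factors, all integrable against the bounded weight $w$; hence $\log\Psi$ and its first two $\boldsymbol{\theta}$-derivatives lie in $L^p(w\,d\boldsymbol{\lambda})$ for every $p<\infty$.

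The analytic core is the moment/CLT-type condition \textbf{A5}, which demands that $f(\cdot,\boldsymbol{\theta}_0)$ and the test functions $w(\cdot)\log\Psi(\cdot,\boldsymbol{\theta})$ together with their $\boldsymbol{\theta}$-derivatives lie in suitable conjugate Lebesgue spaces, so that the sample spectral functionals $\int I_T(\boldsymbol{\lambda})g(\boldsymbol{\lambda})\,d\boldsymbol{\lambda}$ obey a law of large numbers (which is all that consistency needs) and, for \textbf{A6}, a central limit theorem on $\mathbb{Z}^2$. Here I would use that $f(\cdot,\boldsymbol{\theta}_0)\in L^p$ for every $p<1/(2\max_i d_{i0})$, a threshold exceeding $1$ by the $|\lambda_i\mp\nu_i|^{-2d_{i0}}$ order of the pole in each coordinate, while $w\log\Psi$, being bounded times logarithmic, lies in every $L^q$, $q<\infty$; since $2\max_i d_{i0}<1$, the Fox--Taqqu-type exponent condition holds with room to spare. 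This is also where the $C^2$-boundedness of $w_0$ and the exponents $a_i>1$ are genuinely used, and where I expect the main obstacle to lie: simultaneously controlling, in both coordinates, the interaction of the spectral poles of $f$ with the zeros of $w$ at the same points, and bounding the bias term $\int(I_T-f)\,w\log\Psi\,d\boldsymbol{\lambda}$ uniformly in $\boldsymbol{\theta}$ on the grid $\{0,\dots,T\}^2$. Non-degeneracy of the limiting covariance matrix (condition \textbf{A6}) I would obtain from the linear independence of $\partial_{d_1}\log\Psi$ and $\partial_{d_2}\log\Psi$ in $L^2(f(\cdot,\boldsymbol{\theta}_0)w\,d\boldsymbol{\lambda})$, which holds because each carries a logarithmic singularity on a different coordinate hyperplane.

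With \textbf{A1}--\textbf{A6} in force, the first two bullets follow at once: the first is precisely the assertion that the conditions hold, and consistency $\hat{\boldsymbol{\theta}}_T\overset{P_{0}}{\longrightarrow}\boldsymbol{\theta}_0$ is the conclusion of the general theorem recalled in the Appendix. For the third bullet, $\hat{\sigma}^2_T=\int_{[-\pi,\pi]^2}I_T(\boldsymbol{\lambda})w(\boldsymbol{\lambda})\,d\boldsymbol{\lambda}$, and since $g=w$ is itself bounded and hence satisfies the $L^q$ requirement of \textbf{A5}, the same law of large numbers for spectral functionals yields $\hat{\sigma}^2_T\overset{P_{0}}{\longrightarrow}\int_{[-\pi,\pi]^2}f(\boldsymbol{\lambda},\boldsymbol{\theta}_0)w(\boldsymbol{\lambda})\,d\boldsymbol{\lambda}=\sigma^2(\boldsymbol{\theta}_0)$ by \eqref{eq_sigma_factorization}; alternatively one composes the consistent estimator $\hat{\boldsymbol{\theta}}_T$ with the continuous map $\boldsymbol{\theta}\mapsto\sigma^2(\boldsymbol{\theta})$.
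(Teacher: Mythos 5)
Your overall strategy coincides with the paper's: verify \textbf{A1}--\textbf{A6} and invoke Theorem~3 of \citet{Anh:2004}. Conditions \textbf{A1}--\textbf{A4} are treated in essentially the same way (the paper packages the domination bounds needed to differentiate under the integral sign into Lemma~\ref{lem2}, which also establishes that $\sigma^{2}(\boldsymbol{\theta})$ and its first two derivatives are bounded and that $\sigma^{2}(\boldsymbol{\theta})$ is separated from zero; you assert the positivity but this lemma is the workhorse behind \textbf{A4}). However, two genuine gaps remain. First, for \textbf{A5} you argue by exponent counting in conjugate Lebesgue spaces: $f(\cdot,\boldsymbol{\theta}_0)\in L^{p}$ for $p<1/(2\max_i d_{i0})$ while $w\log\Psi$ lies in every $L^{q}$. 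H\"older then gives $f(\cdot,\boldsymbol{\theta}_0)\,w\log\Psi\in L_{2}$ only when $1/p\le 1/2$, i.e. $\max_i d_{i0}<1/4$, whereas Theorem~\ref{Theorem_1} is claimed for all $(d_{10},d_{20})\in(0,1/2)^{2}$. For $d_{i0}\ge 1/4$ one must use the pointwise cancellation of the pole of $f$ by the zero of $w$ at $\lambda_i=\pm\nu_i$, namely the bound $\bigl[f(\boldsymbol{\lambda},\boldsymbol{\theta}_0)w(\boldsymbol{\lambda})\log\Psi(\boldsymbol{\lambda},\boldsymbol{\theta})\bigr]^{2}\le C\prod_{i}|\lambda_i\pm\nu_i|^{2a_i-4\tilde d_i}\bigl|\log|\lambda_i\pm\nu_i|\bigr|$ with $2a_i-4\tilde d_i>0$ since $a_i>1$ and $\tilde d_i<1/2$. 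This is precisely the step you describe as ``the main obstacle'' but do not carry out, and your stated route would fail without it.

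Second, you have mismatched condition \textbf{A6}: it is not the non-degeneracy of the limiting covariance matrix (that is \textbf{A8}, which is needed only for Theorem~\ref{Theorem_2} and is explicitly not part of Theorem~\ref{Theorem_1}). Condition \textbf{A6} requires exhibiting an auxiliary function $v$ such that $h(\boldsymbol{\lambda},\boldsymbol{\theta})=v(\boldsymbol{\lambda})\log\Psi(\boldsymbol{\lambda},\boldsymbol{\theta})$ is \emph{uniformly continuous} on $[-\pi,\pi]^{2}\times\Theta$ and $f(\cdot,\boldsymbol{\theta}_0)w/v\in L_{1}\cap L_{2}$. The paper takes $v(\boldsymbol{\lambda})=|\lambda_1^2-\nu_1^2|^{\beta}|\lambda_2^2-\nu_2^2|^{\beta}$ with $\beta\in(0,1/2)$, checks that $v\log f\to 0$ as $\lambda_i\to\pm\nu_i$ so that $h$ extends continuously (hence uniformly continuously on the compact product), and bounds $|f(\cdot,\boldsymbol{\theta}_0)w/v|\le Cv^{-1}\in L_{2}$. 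This verification is entirely absent from your proposal, so the appeal to the general consistency theorem is not yet justified. (Your Kullback--Leibler argument for uniqueness of the minimizer of $K(\boldsymbol{\theta}_0,\cdot)$ is sound, but it pertains to the identifiability condition \textbf{A2}, not \textbf{A4}.)
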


Notice that Assumption {\bf A4} is not required to prove the last two statements in Theorem~\ref{Theorem_1}, but it will be used in the proof of Theorem~\ref{Theorem_2}.

To formulate Theorem~\ref{Theorem_2} we introduce the following notations.
The unbiased estimator of the correlation function
 $\gamma (t_{1},t_{2},\boldsymbol{\theta }),$ 
 $\boldsymbol{t}=(t_{1},t_{2})\in \mathbb{Z}^{2},$ of the Gegenbauer random field $Y_{t_{1},t_{2}}$
is
\begin{equation}
\hat{\gamma}_{T}(\boldsymbol{t})= \frac{1}{(T-|t_{1}|)(T-|t_{2}|)}
\sum_{k=0}^{T-|t_1|}\sum_{l=0}^{T-|t_2|} Y_{k,l} Y_{|t_{1}|+k,|t_{2}|+l}. \nonumber
\end{equation}
Note that all indices of the random field in the sum above are within the set $\{(t_{1},t_{2}):\,t_{1},t_{2}=0,...,T\},$ where the observations are available.

The unbiased  periodogram is given by
\begin{equation} \label{equation_periodogram_estimado}
I_{T}^{*}(\lambda_{1},\lambda_{2})= \frac{1}{(2\pi)^{2}}
\sum_{t_{1}=1-T}^{T-1}\sum_{t_{2}=1-T}^{T-1} e^{-
i(\lambda_{1}t_{1}+\lambda_{2}t_{2})} \hat{\gamma}_{T}(\boldsymbol{t}),
\nonumber
\end{equation}
  and the corresponding empirical \emph{contrast field} is
\[
    \hat{U}_{T}^{*}(\boldsymbol{\theta})=-\displaystyle\int_{[-\pi, \pi]^2}
     I_{T}^{*}(\boldsymbol{\lambda}) w(\boldsymbol{\lambda}) \log \Psi(\boldsymbol{\lambda},\boldsymbol{\theta}) \,\, d\boldsymbol{\lambda}.
\]
We also define $\hat{\sigma}^{2*}_{T}=\int_{[-\pi, \pi]^2}
I_{T}^{*}(\boldsymbol{\lambda})w(\boldsymbol{\lambda})\,\,
 d\boldsymbol{\lambda}$ and the associated adjusted MCE
\begin{equation} \label{equation_theta_estimado2}
    \hat{\boldsymbol{\theta}}^{*}_{T}= (\hat{d}^{*}_{1}, \hat{d}^{*}_{2})= \mbox{arg} \min_{\boldsymbol{\theta}
     \in \Theta} \, \hat{U}_{T}^{*}(\boldsymbol{\theta}).
\end{equation}

\begin{theorem}\label{Theorem_2}
If $Y_{t_{1},t_{2}},\ (t_{1},t_{2})\in \mathbb{Z}^{2},$ is a
stationary Gegenbauer random field which spectral density satisfies equation {\rm(\ref{spectral_density})} with $(d_1,d_2)\in (0,1/4)^2,$ then
\begin{itemize}
\item $Y_{t_{1},t_{2}}$ satisfies the conditions
{\rm \bf A1-A9} in the Appendix;
\item the adjusted MCE defined by
{\rm(\ref{equation_theta_estimado2})} is asymptotically normal. That is,
$$T(\hat{\boldsymbol{\theta}}^{*}_{T}-\boldsymbol{\theta}_{0})\stackrel{D}{\longrightarrow}\mathcal{N}_{2}
(0,\mathbf{S}^{-1}(\boldsymbol{\theta}_{0})
A(\boldsymbol{\theta}_{0})\mathbf{S}^{-1}(\boldsymbol{\theta}_{0})),\quad  T\longrightarrow \infty,$$
where the entries of the matrices $\mathbf{S}(\boldsymbol{\theta})=(s_{ij}(\boldsymbol{\theta}))$      and $\mathbf{A}(\boldsymbol{\theta})=(a_{ij}(\boldsymbol{\theta}))$  are  
\begin{eqnarray}
{}\hspace{-1cm}s_{ij}(\boldsymbol{\theta})&=& \displaystyle\int_{[-\pi,\pi]^{2}}
f(\boldsymbol{\lambda},\boldsymbol{\theta}) w(\boldsymbol{\lambda})
\frac{\partial^{2}}{\partial \theta_{i}\partial \theta_{j}}
\log \Psi(\boldsymbol{\lambda},\boldsymbol{\theta}) \,\, d\boldsymbol{\lambda}=\sigma^{2}(\boldsymbol{\theta})
\displaystyle\int_{[-\pi,\pi]^{2}} w(\boldsymbol{\lambda}) \nonumber \\
& & \times\left[
\frac{\partial^{2}}{\partial \theta_{i}\partial \theta_{j}}
\Psi(\boldsymbol{\lambda},\boldsymbol{\theta}) -
\frac{1}{\Psi(\boldsymbol{\lambda},\boldsymbol{\theta})}
  \frac{\partial}{\partial \theta_{i}} \Psi(\boldsymbol{\lambda},\boldsymbol{\theta})
  \frac{\partial}{\partial \theta_{j}} \Psi(\boldsymbol{\lambda},\boldsymbol{\theta})
\right] \, d\boldsymbol{\lambda}, \label{sij}
\end{eqnarray}
\begin{eqnarray}
a_{ij}(\boldsymbol{\theta})&=& 8\pi^{2}
\displaystyle\int_{[-\pi,\pi]^{2}}
f^{2}(\boldsymbol{\lambda},\boldsymbol{\theta})
w^{2}(\boldsymbol{\lambda}) \frac{\partial}{\partial \theta_{i}}
\log \left(\Psi(\boldsymbol{\lambda},\boldsymbol{\theta}) \right)
\frac{\partial}{\partial \theta_{j}} \log
\left(\Psi(\boldsymbol{\lambda},\boldsymbol{\theta}) \right)
\,\, d\boldsymbol{\lambda} \nonumber \\
&=&  8\pi^{2} \sigma^{4}(\boldsymbol{\theta})
\displaystyle\int_{[-\pi,\pi]^{2}} w^{2}(\boldsymbol{\lambda})
\frac{\partial}{\partial \theta_{i}}
\Psi(\boldsymbol{\lambda},\boldsymbol{\theta})
 \frac{\partial}{\partial \theta_{j}} \Psi(\boldsymbol{\lambda},\boldsymbol{\theta})
 \,\, d\boldsymbol{\lambda}. \label{aij}
\end{eqnarray}
\end{itemize}
\end{theorem}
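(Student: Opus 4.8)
The proof runs on two levels. On the first level one checks that the stationary Gegenbauer random field with spectral density (\ref{spectral_density}) and the weight function (\ref{weightfunction}) satisfies the conditions \textbf{A1}--\textbf{A9} recalled in the Appendix; on the second level one invokes the general asymptotic normality theorem for minimum contrast estimators --- the integer-lattice version of the result of \citet{Anh:2004} stated in the Appendix --- which, given \textbf{A1}--\textbf{A9}, yields precisely $T(\hat{\boldsymbol{\theta}}^{*}_{T}-\boldsymbol{\theta}_{0})\stackrel{D}{\longrightarrow}\mathcal{N}_{2}(0,\mathbf{S}^{-1}(\boldsymbol{\theta}_0)\mathbf{A}(\boldsymbol{\theta}_0)\mathbf{S}^{-1}(\boldsymbol{\theta}_0))$ with $\mathbf{S}$ and $\mathbf{A}$ given by (\ref{sij})--(\ref{aij}). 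Since Theorem~\ref{Theorem_1} already establishes \textbf{A1}--\textbf{A6}, the new work is to verify the additional smoothness, $L^{2}$-integrability and central-limit conditions \textbf{A7}--\textbf{A9}, now under the stronger hypothesis $(d_1,d_2)\in(0,1/4)^{2}$, and to transfer these to the adjusted contrast field $\hat{U}^{*}_{T}(\boldsymbol{\theta})$ based on the unbiased periodogram $I^{*}_{T}$.

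\noindent\textbf{Step 1 (local behaviour and smoothness).} By (\ref{spectral_density}), $f(\boldsymbol{\lambda},\boldsymbol{\theta})$ is $C^{\infty}$ and strictly positive on $[-\pi,\pi]^{2}$ off the lines $\cos\lambda_i=u_i$, i.e.\ $\lambda_i=\pm\nu_i$. Near such a line the expansion $2\cos\lambda_i-2u_i=\mp2\sin\nu_i(\lambda_i\mp\nu_i)+O((\lambda_i\mp\nu_i)^{2})$ shows that, up to bounded positive factors, $f(\boldsymbol{\lambda},\boldsymbol{\theta})$ behaves like $|\lambda_1\mp\nu_1|^{-2d_1}|\lambda_2\mp\nu_2|^{-2d_2}$, while $w(\boldsymbol{\lambda})$ in (\ref{weightfunction}), with $w_0>0$ of class $C^{2}$ and $|\lambda_i^{2}-\nu_i^{2}|=|\lambda_i-\nu_i|\,|\lambda_i+\nu_i|$, behaves like $|\lambda_1\mp\nu_1|^{a_1}|\lambda_2\mp\nu_2|^{a_2}$. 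Hence $wf\sim|\lambda_i\mp\nu_i|^{a_i-2d_i}$ and $w^{2}f^{2}\sim|\lambda_i\mp\nu_i|^{2a_i-4d_i}$ in each coordinate, and $a_i>1$ makes both integrable over $[-\pi,\pi]^{2}$ for every $\boldsymbol{\theta}\in\Theta$; this supplies \textbf{A3} and the integrability content of \textbf{A7}--\textbf{A9}. Differentiating (\ref{spectral_density}) in $d_i$ produces factors $\log|2\cos\lambda_i-2u_i|\in L^{p}_{\mathrm{loc}}$ for all $p<\infty$, so $\Psi=f/\sigma^{2}$ and $\log\Psi$ are twice continuously differentiable in $\boldsymbol{\theta}$ uniformly on compact subsets of $\Theta$, all weighted integrals in \textbf{A2}, \textbf{A7}, \textbf{A8} are finite, and differentiation under the integral sign in (\ref{eq_sigma_factorization}), (\ref{sij}), (\ref{aij}) is justified. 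Identifiability (part of \textbf{A1}/\textbf{A4}) holds because different $(d_1,d_2)$ give different exponents of the two independent power-law singularities, so $\Psi(\cdot,\boldsymbol{\theta})\neq\Psi(\cdot,\boldsymbol{\theta}_0)$ on a positive-measure set for $\boldsymbol{\theta}\neq\boldsymbol{\theta}_0$; as $\Psi(\cdot,\boldsymbol{\theta})w$ is a probability density by (\ref{equation_psi}), the non-negativity of relative entropy gives $K(\boldsymbol{\theta}_0,\boldsymbol{\theta})>0$ with the unique minimum at $\boldsymbol{\theta}_0$.

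\noindent\textbf{Step 2 (matrices, non-degeneracy and the limit law).} Substituting $\Psi=f/\sigma^{2}$, using $\int w\,\partial_i\partial_j\Psi\,d\boldsymbol{\lambda}=\partial_i\partial_j\!\int w\Psi\,d\boldsymbol{\lambda}=0$ from (\ref{equation_psi}), and $\tfrac{\partial}{\partial d_i}\log\Psi(\boldsymbol{\lambda},\boldsymbol{\theta})=-2\log|2\cos\lambda_i-2u_i|-\tfrac{\partial}{\partial d_i}\log\sigma^{2}(\boldsymbol{\theta})$, one obtains the closed forms of $s_{ij}$ and $a_{ij}$ in (\ref{sij})--(\ref{aij}); in particular $\mathbf{S}(\boldsymbol{\theta})$ is $-1$ times the Gram matrix of the score functions $\partial_{\theta_i}\log\Psi(\cdot,\boldsymbol{\theta})$ in $L^{2}(fw\,d\boldsymbol{\lambda})$, and $\mathbf{A}(\boldsymbol{\theta})$ a nonnegative multiple of a Gram matrix. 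Non-singularity of $\mathbf{S}(\boldsymbol{\theta}_0)$ --- needed to invert it in the limit --- reduces to linear independence of $1$, $\log|2\cos\lambda_1-2u_1|$, $\log|2\cos\lambda_2-2u_2|$ in $L^{2}([-\pi,\pi]^{2},f(\boldsymbol{\lambda},\boldsymbol{\theta}_0)w(\boldsymbol{\lambda})\,d\boldsymbol{\lambda})$, which holds since the second function depends on $\lambda_1$ only, the third on $\lambda_2$ only, and neither is almost everywhere constant. With these ingredients the general theorem applies: the linearisation $0=\nabla\hat{U}^{*}_{T}(\hat{\boldsymbol{\theta}}^{*}_{T})$, consistency (proved as in Theorem~\ref{Theorem_1} but for $\hat{U}^{*}_{T}$), the law of large numbers $\nabla^{2}\hat{U}^{*}_{T}\stackrel{P_0}{\to}-\mathbf{S}(\boldsymbol{\theta}_0)$, and a central limit theorem with rate $T$ for the weighted integrated unbiased periodogram $\int I^{*}_{T}(\boldsymbol{\lambda})\,\partial_{\theta_i}\log\Psi(\boldsymbol{\lambda},\boldsymbol{\theta}_0)\,w(\boldsymbol{\lambda})\,d\boldsymbol{\lambda}$ (with limiting covariance $\mathbf{A}(\boldsymbol{\theta}_0)$) together give the stated normal limit. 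It is here that $(d_1,d_2)\in(0,1/4)^{2}$ is essential: it is equivalent to $f(\cdot,\boldsymbol{\theta})\in L^{2}([-\pi,\pi]^{2})$, i.e.\ $\sum_{\boldsymbol{t}\in\mathbb{Z}^{2}}\gamma^{2}(\boldsymbol{t},\boldsymbol{\theta})<\infty$, which is the Fox--Taqqu / Giraitis--Surgailis-type condition behind that central limit theorem; passing from $I_T$ to $I^{*}_{T}$ removes the periodogram bias that would otherwise corrupt the limiting law.

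\noindent\textbf{Main obstacle.} The local power-counting at the poles and the $\boldsymbol{\theta}$-smoothness of $f$ are routine. The real difficulty --- and where the Appendix does the work --- is verifying the central-limit conditions among \textbf{A7}--\textbf{A9}, namely the CLT with normalising rate $T$ for the weighted integrated periodogram of a long-range dependent lattice field together with the accompanying bias estimate for $I^{*}_{T}$; this is exactly the point that forces the restriction $(d_1,d_2)\in(0,1/4)^{2}$, and, once it is in hand, only the non-singularity of $\mathbf{S}(\boldsymbol{\theta}_0)$ remains to be checked, as above.
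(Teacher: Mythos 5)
Your overall architecture matches the paper's: conditions \textbf{A1}--\textbf{A6} are inherited from Theorem~\ref{Theorem_1}, \textbf{A7} follows from the boundedness of the weighted derivatives of $f$ and of $\sigma^2(\boldsymbol{\theta})$ (Lemma~\ref{lem2}), and your positive-definiteness argument for \textbf{A8} via linear independence of $1$, $\log|2\cos\lambda_1-2u_1|$ and $\log|2\cos\lambda_2-2u_2|$ is essentially the paper's observation that the corresponding score components are not a.s.\ linearly related.

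However, there is a genuine gap at condition \textbf{A9}. You correctly single out the bias estimate $T\int_{[-\pi,\pi]^2}(EI^*_T(\boldsymbol{\lambda})-f(\boldsymbol{\lambda},\boldsymbol{\theta}_0))\,w(\boldsymbol{\lambda})\,\partial_{\theta_i}\log\Psi(\boldsymbol{\lambda},\boldsymbol{\theta})\,d\boldsymbol{\lambda}\to 0$ as ``the main obstacle'' and as the place where $(d_1,d_2)\in(0,1/4)^2$ enters, but you then defer it to ``the Appendix,'' which only \emph{states} \textbf{A9} as a hypothesis of the general theorem of \citet{Anh:2004}; it does not verify it for the Gegenbauer model. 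Verifying \textbf{A9} is the substantive new content of the proof and cannot be dispatched by observing that $f\in L^2$: the spectral density is unbounded at $\lambda_i=\pm\nu_i$, so the standard $\mathcal{O}(T^{-2})$ bias bound of \citet{Guyon:1982} for the unbiased periodogram does not apply directly. The paper's argument factorizes $w=w_1w_2$ so that $\tilde f=fw_1$ and $w_2$ are bounded, applies Guyon's Proposition~2 to the auxiliary field with spectral density $\tilde f$, and controls the remaining difference via Parseval's theorem by writing it as a Fej\'er-kernel convolution $\int\Phi_{T-1}(\boldsymbol{x})f(\boldsymbol{x}+\boldsymbol{\lambda},\boldsymbol{\theta}_0)\bigl(w_1(\boldsymbol{\lambda})-w_1(\boldsymbol{\lambda}+\boldsymbol{x})\bigr)\,d\boldsymbol{x}$ and splitting the integration domain at $|x_i|\le T^{-\alpha}$ with $\alpha\in(2\max(d_1,d_2),1/2)$ --- this is precisely where $(d_1,d_2)\in(0,1/4)^2$ and the local behaviour of $w_1$ near the poles are used to get the required $o(T^{-1})$ rate. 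Without this (or an equivalent) argument the asymptotic normality claim is not established; your appeal to a Fox--Taqqu/Giraitis--Surgailis-type CLT addresses the distributional limit of the integrated periodogram (which the general theorem already supplies once \textbf{A1}--\textbf{A9} hold), not the edge-effect bias condition itself.
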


To avoid the edge effect \citet{Anh:2004} employed the modified periodogram approach suggested by \citet{Guyon:1982}. We use their assumptions in Theorem~{\rm\ref{Theorem_2}}. Note that some authors pointed few problems in using $I_{T}^{*},$ see \citet{Sanz:2009}, \citet{Yao:2006}, and references therein.
Various other modifications, for example, typed variograms, smoothed variograms, kernel estimators,   to reduce the edge effect have been
proposed. It would be interesting to prove analogous of the results by \citet{Anh:2004} and Theorem~{\rm\ref{Theorem_2}} for these modifications, too.   
However, it is beyond the scope of this paper. Moreover, it remains as an open problem whether the edge-effect modification is essential for the asymptotic normality or not, see \citet{Yao:2006}.

\section{Proofs}
To prove the theorems we will use the following differentiability lemma.
\begin{lemma}{\rm[\citet[Theorem 11.5]{Schilling:2005}]}\label{lem1}
Let $(X,\cal{F},\mu)$ be a measurable space, $A\in\cal{F},$ and $\Theta\subset\mathbb{R}$ be an open set. Suppose the function $F: X\times \Theta\to\mathbb{R}$ satisfies the following conditions:
\begin{enumerate}
\item For all $\theta\in \Theta:$ $F(\cdot,\theta)\in L_1(A);$
\item For almost all $x\in A$  the derivative $\frac{\partial F(x,\cdot)}{\partial \theta}$ exists for all $\theta\in \Theta;$
\item     There is an integrable function $g:X\to \mathbb{R}$ such that $\left|\frac{\partial F(x,\theta)}{\partial\theta}\right|\le g(x)$  for almost all $x\in A.$
\end{enumerate}
Then there exists 
$\frac{\partial}{\partial \theta}\int_A F(x,\theta)d\mu(x)=\int_A \frac{\partial F(x,\theta)}{\partial \theta}d\mu(x).$
\end{lemma}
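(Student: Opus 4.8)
The plan is to deduce the identity from the Dominated Convergence Theorem together with the sequential characterisation of the derivative. Write $\Phi(\theta):=\int_A F(x,\theta)\,d\mu(x)$, which is well defined and finite for every $\theta\in\Theta$ by condition~(1). Fix $\theta\in\Theta$; since $\Theta$ is open there is $\varepsilon>0$ with $(\theta-\varepsilon,\theta+\varepsilon)\subset\Theta$. Let $(h_n)_{n\ge1}$ be an arbitrary sequence with $0<|h_n|<\varepsilon$ and $h_n\to0$, and put
\[
  g_n(x):=\frac{F(x,\theta+h_n)-F(x,\theta)}{h_n},\qquad x\in A .
\]
Each $g_n$ is measurable and, by condition~(1), integrable on $A$, and by linearity of the integral $\int_A g_n(x)\,d\mu(x)=\bigl(\Phi(\theta+h_n)-\Phi(\theta)\bigr)/h_n$.

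First I would pin down the pointwise limit and a dominating function. By condition~(2) there is a single $\mu$-null set $N\subset A$ such that $t\mapsto F(x,t)$ is differentiable on all of $\Theta$ for every $x\in A\setminus N$; hence $g_n(x)\to \frac{\partial F(x,\theta)}{\partial\theta}$ as $n\to\infty$ for all $x\in A\setminus N$. For the domination, fix $x\in A\setminus N$ and $n$: the classical one-variable Mean Value Theorem applied to $t\mapsto F(x,t)$ on the closed interval with endpoints $\theta$ and $\theta+h_n$ (which lies in $\Theta$) furnishes a point $\xi$ strictly between them with $g_n(x)=\frac{\partial F(x,\xi)}{\partial\theta}$, so condition~(3) yields $|g_n(x)|\le g(x)$ for all $n$ and all $x\in A\setminus N$, with $g\in L_1(A)$. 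Note that the intermediate point $\xi=\xi_n(x)$ need not depend measurably on $x$: only the scalar bound $|g_n(x)|\le g(x)$ is used, while measurability of $g_n$ itself is already guaranteed.

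Then I would invoke the Dominated Convergence Theorem for $(g_n)$ on $A$, obtaining
\[
  \lim_{n\to\infty}\frac{\Phi(\theta+h_n)-\Phi(\theta)}{h_n}
  =\lim_{n\to\infty}\int_A g_n\,d\mu
  =\int_A \frac{\partial F(x,\theta)}{\partial\theta}\,d\mu(x),
\]
and the right-hand integral is finite because it is dominated by $\int_A g\,d\mu<\infty$. Since $(h_n)$ was an arbitrary null sequence, the difference quotients of $\Phi$ at $\theta$ converge to this common value as $h\to0$, i.e.\ $\Phi$ is differentiable at $\theta$ with $\Phi'(\theta)=\int_A \frac{\partial F(x,\theta)}{\partial\theta}\,d\mu(x)$, which is the assertion. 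I do not anticipate a genuine obstacle here --- this is the standard justification of differentiation under the integral sign; the only point requiring a moment's care is that the exceptional null set $N$ in condition~(2) is chosen once and for all, independently of $n$, which is exactly what the hypothesis provides (had differentiability been assumed only for each fixed $\theta$, one would instead take the still-null union of the exceptional sets over the countably many arguments $\theta+h_n$).
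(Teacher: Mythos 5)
Your argument is correct and is the standard proof of differentiation under the integral sign: difference quotients, the Mean Value Theorem to produce the dominating function $g$, and the Dominated Convergence Theorem along an arbitrary null sequence. The paper itself gives no proof of this lemma --- it is quoted directly from \citet[Theorem 11.5]{Schilling:2005} --- and your write-up essentially reproduces the textbook argument, including the correct observation that the exceptional null set from condition~(2) and the non-measurable choice of the intermediate point $\xi$ cause no difficulty.
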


\begin{lemma}\label{lem2} The function $\sigma^{2}(\boldsymbol{\theta
})$ is bounded and separated from zero on $\Theta.$ Moreover,  its first and second order derivatives are bounded on $\Theta$ and can be computed by
\begin{eqnarray}\label{dtheta}\frac{\partial}{\partial
  \theta_{i}}\sigma^{2}(\boldsymbol{\theta})&=&
\int_{[-\pi,\pi]^{2}}
w(\boldsymbol{\lambda}) \frac{\partial}{\partial
        \theta_{i}} f(\boldsymbol{\lambda},\boldsymbol{\theta}) \, d\boldsymbol{\lambda}\nonumber\\ &=&-2\int_{[-\pi,\pi]^{2}}
        \log\left|2\cos\lambda_{i}-2u_{i}\right|\,w(\boldsymbol{\lambda})  f(\boldsymbol{\lambda},\boldsymbol{\theta}) \, d\boldsymbol{\lambda},\end{eqnarray}
\begin{eqnarray}\frac{\partial^{2}}{\partial
                    \theta_{j}\partial \theta_{i}}\sigma^{2}(\boldsymbol{\theta})&=&
                 \displaystyle\int_{[-\pi,\pi]^{2}}
              w(\boldsymbol{\lambda}) \frac{\partial^{2}}{\partial
                  \theta_{j}\partial \theta_{i}} f(\boldsymbol{\lambda},\boldsymbol{\theta}) \, d\boldsymbol{\lambda}= 4\int_{[-\pi,\pi]^{2}}
                          \log\left|2\cos\lambda_{i}-2u_{i}\right|\nonumber\\
& & \times \log\left|2\cos\lambda_{j}-2u_{j}\right|\,w(\boldsymbol{\lambda})  f(\boldsymbol{\lambda},\boldsymbol{\theta}) \, d\boldsymbol{\lambda},\label{d2theta}    \end{eqnarray}
where $i,j=1,2.$ 
\end{lemma}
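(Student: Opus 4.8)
The plan is to derive everything from the elementary two–sided estimate, valid since $|u_i|<1$ so that $\nu_i=\arccos u_i\in(0,\pi)$ and $\sin\nu_i>0$:
\[
c_i\,\bigl|\lambda_i^2-\nu_i^2\bigr|\ \le\ \bigl|2\cos\lambda_i-2u_i\bigr|
= 4\,\Bigl|\sin\tfrac{\lambda_i+\nu_i}{2}\Bigr|\,\Bigl|\sin\tfrac{\lambda_i-\nu_i}{2}\Bigr|\ \le\ 4,
\qquad \lambda_i\in[-\pi,\pi],
\]
where $c_i>0$ depends only on $u_i$; the lower bound holds because $\lambda_i=\pm\nu_i$ are the only zeros on $[-\pi,\pi]$ of the right--hand side and the ratios $|\sin\tfrac{\lambda_i\pm\nu_i}{2}|/|\lambda_i\pm\nu_i|$ extend to strictly positive continuous functions there. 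This yields two facts to be used repeatedly: (a) $\bigl|2\cos\lambda_i-2u_i\bigr|^{-2d_i}\ge 4^{-1}$ for all $d_i\in(0,1/2)$, hence $f(\boldsymbol\lambda,\boldsymbol\theta)\ge\sigma_\varepsilon^2/(16(2\pi)^2)$ on $[-\pi,\pi]^2$; and (b) $\bigl|2\cos\lambda_i-2u_i\bigr|^{-2d_i}\le C\,|\lambda_i^2-\nu_i^2|^{-2d_i}$ with $C$ independent of $\boldsymbol\theta$, so that, since $w_0$ is bounded,
\[
0\ \le\ f(\boldsymbol\lambda,\boldsymbol\theta)\,w(\boldsymbol\lambda)\ \le\ C\prod_{i=1}^{2}\bigl|\lambda_i^2-\nu_i^2\bigr|^{a_i-2d_i}.
\]

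For boundedness and separation from zero of $\sigma^2(\boldsymbol\theta)$: since $a_i>1$ we have $a_i-2d_i>a_i-1>0$, and the elementary inequality $s^{\alpha}\le\max(1,\pi^{2a_i})\,s^{a_i-1}$ for $s\in[0,\pi^2]$ and $\alpha\in[a_i-1,a_i]$ turns the preceding bound into one independent of $\boldsymbol\theta$; integrating over $[-\pi,\pi]^2$ gives $\sup_{\boldsymbol\theta\in\Theta}\sigma^2(\boldsymbol\theta)<\infty$ (this also reconfirms that $\sigma^2$ is well defined, already guaranteed by condition~{\rm \bf A3}). For the lower bound, using (a) and the fact that $w_0$ is continuous and strictly positive on the compact $[-\pi,\pi]^2$,
\[
\sigma^2(\boldsymbol\theta)\ \ge\ \frac{\sigma_\varepsilon^2}{16(2\pi)^2}\int_{[-\pi,\pi]^2}w(\boldsymbol\lambda)\,d\boldsymbol\lambda\ >\ 0,
\]
and this bound does not depend on $\boldsymbol\theta$.

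For the derivative formulas I would differentiate under the integral sign using Lemma~\ref{lem1}, applied to the variable $\theta_i=d_i$ with the other coordinate held fixed. From the explicit form of $f$ one has $\frac{\partial}{\partial\theta_i}f=-2\log\bigl|2\cos\lambda_i-2u_i\bigr|\,f$ and $\frac{\partial^2}{\partial\theta_j\partial\theta_i}f=4\log\bigl|2\cos\lambda_i-2u_i\bigr|\log\bigl|2\cos\lambda_j-2u_j\bigr|\,f$ (the case $i=j$ included), so (\ref{dtheta}) and (\ref{d2theta}) follow once the interchange $\partial_{\theta_i}\!\int=\int\partial_{\theta_i}$ is justified. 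Condition~(1) of Lemma~\ref{lem1} holds because $f(\cdot,\boldsymbol\theta)w(\cdot)$ (and, for the second application, $\frac{\partial}{\partial\theta_i}(fw)$) is bounded for each fixed $\boldsymbol\theta$; condition~(2) holds at every $\boldsymbol\lambda$ with $\cos\lambda_i\ne u_i$, i.e.\ almost everywhere. For condition~(3), and simultaneously for the asserted boundedness of the derivatives, I would exhibit a $\boldsymbol\theta$--independent integrable majorant: combining $|2\cos\lambda_i-2u_i|\ge c_i|\lambda_i^2-\nu_i^2|$ with $|2\cos\lambda_i-2u_i|\le4$ gives $\bigl|\log|2\cos\lambda_i-2u_i|\bigr|\le C\bigl(1+\bigl|\log|\lambda_i^2-\nu_i^2|\bigr|\bigr)$ and likewise $(\log|2\cos\lambda_i-2u_i|)^2\le C\bigl(1+(\log|\lambda_i^2-\nu_i^2|)^2\bigr)$; since $a_i-1>0$, the functions $(1+|\log s|)\,s^{a_i-1}$ and $(1+(\log s)^2)\,s^{a_i-1}$ are bounded on $[0,\pi^2]$, so $\bigl|\frac{\partial}{\partial\theta_i}(fw)\bigr|$ and $\bigl|\frac{\partial^2}{\partial\theta_j\partial\theta_i}(fw)\bigr|$ are bounded on $[-\pi,\pi]^2$ by a constant independent of $\boldsymbol\theta$. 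Such a constant function serves as $g$ in Lemma~\ref{lem1}, and integrating it yields both the interchange and the uniform boundedness of $\frac{\partial}{\partial\theta_i}\sigma^2$ and $\frac{\partial^2}{\partial\theta_j\partial\theta_i}\sigma^2$ on $\Theta$; the second--order case comes from a second application of Lemma~\ref{lem1} with $F=\frac{\partial}{\partial\theta_i}(fw)$, whose $\theta_j$--derivative is dominated exactly as above.

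The only genuine obstacle is the uniformity in $\boldsymbol\theta$ demanded by Lemma~\ref{lem1}: the parameter enters through the exponents $a_i-2d_i$ of the near--singularity factors, so ``continuous on a compact set, hence bounded'' does not by itself suffice, and the uniform bound must be extracted from $s^{a_i-2d_i}\le\max(1,\pi^{2a_i})\,s^{a_i-1}$ together with the domination of a logarithm by an arbitrarily small power near $0$. Everything else --- the product formula for $|2\cos\lambda_i-2u_i|$, the positivity of $\sin\nu_i$, and the differentiation of $f$ in $d_i$ --- is routine.
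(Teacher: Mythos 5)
Your proposal is correct and follows essentially the same route as the paper's proof: compute $\partial_{\theta_i}f=-2\log|2\cos\lambda_i-2u_i|\,f$ and its second-order analogue, establish uniform boundedness of $wf$, $w\,\partial_{\theta_i}f$ and $w\,\partial^2_{\theta_j\theta_i}f$ on $[-\pi,\pi]^2\times\Theta$, and invoke the differentiation lemma with a constant majorant; the lower bound on $\sigma^2$ likewise combines a $\boldsymbol{\theta}$-uniform pointwise lower bound on $f$ with positivity of $w$ on a set of positive measure. The only difference is that you spell out, via the two-sided estimate $c_i|\lambda_i^2-\nu_i^2|\le|2\cos\lambda_i-2u_i|\le 4$ and the domination of logarithms by small powers, the uniform-in-$\boldsymbol{\theta}$ suprema that the paper simply asserts ``by the choice of the weight function.''
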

\begin{proof}
 By the choice (\ref{weightfunction}) of the weight function we obtain
\begin{equation}\label{fw1} 
\sup_{[-\pi,\pi]^{2}\times \Theta}f(\boldsymbol{\lambda },\boldsymbol{\theta
})w(\boldsymbol{\lambda })<+\infty\end{equation} 
and 
\[\sup_{\theta\in \Theta}\sigma^{2}(\boldsymbol{\theta
})=\int_{[-\pi,\pi]^{2}}\sup_{\theta\in \Theta}f(\boldsymbol{\lambda },\boldsymbol{\theta
})w(\boldsymbol{\lambda })d\boldsymbol{\lambda }\leq 4\pi^{2} \sup_{[-\pi,\pi]^{2}\times \Theta}f(\boldsymbol{\lambda },\boldsymbol{\theta
})w(\boldsymbol{\lambda })<+\infty. 
\] Hence, $\sigma^{2}(\boldsymbol{\theta
})$ is bounded.

Note that   $\sup_{[-\pi,\pi]^{2}\times \Theta}\left\{\left|2\cos\lambda_{1}-2u_{1}\right|\right\}^{2d_{1}} \,
    \left\{\left|2\cos\lambda_{2}-2u_{2}\right|\right\}^{2d_{2}}<+\infty.
$
Also, by the choice of the weight function, there exists $\delta>0$ and a set $A_0\subset [-\pi,\pi]^{2}$ of non-zero Lebesgue measure  such that $w(\boldsymbol{\lambda })>\delta$ for all $\boldsymbol{\lambda }\in A_0.$ Therefore, 
 \begin{align}\inf_{\theta\in \Theta}\sigma^{2}(\boldsymbol{\theta
})&=\inf_{\theta\in \Theta}\int_{[-\pi,\pi]^{2}}f(\boldsymbol{\lambda },\boldsymbol{\theta
})w(\boldsymbol{\lambda })d\boldsymbol{\lambda }\nonumber\\
&\geq \frac{\delta\, \boldsymbol{\lambda }(A_0)}{\sup_{[-\pi,\pi]^{2}\times \Theta}\left\{\left|2\cos\lambda_{1}-2u_{1}\right|\right\}^{2d_{1}} \,
    \left\{\left|2\cos\lambda_{2}-2u_{2}\right|\right\}^{2d_{2}}}>0,\nonumber 
 \end{align}
which means that $\sigma^{2}(\boldsymbol{\theta
})$ is separated from zero on $\Theta.$

Now, to study $\frac{\partial}{\partial
\theta_{i}}\sigma^{2}(\boldsymbol{\theta})$ we compute  $\frac{\partial}{\partial \theta_{i}} f(\boldsymbol{\lambda},\boldsymbol{\theta
}), $
  $i=1,2:$ 
        \begin{align}
        \frac{\partial}{\partial \theta_{i}} f(\boldsymbol{\lambda},\boldsymbol{\theta
        }) &=
        \frac{\sigma^{2}_{\varepsilon}}{(2\pi)^{2}} \left\{\left|2\cos\lambda_{j}-2u_{j}\right|\right\}^{-2d_{j}}
        \frac{\partial}{\partial d_{i}} \left\{\left|2\cos\lambda_{i}-2u_{i}\right|\right\}^{-2d_{i}} \nonumber\\
        &= \log\left(\left(2\cos\lambda_{i}-2u_{i}\right)^{-2}\right) \frac{\sigma^{2}_{\varepsilon}}{(2\pi)^{2}}
        \left[2(\cos\lambda_{i}-u_{i}) \right]^{-2d_{i}} \nonumber\\
        &\quad\times\left[2(\cos\lambda_{j}-u_{j}) \right]^{-2d_{j}} = -2\log\left|2\cos\lambda_{i}-2u_{i}\right| f(\boldsymbol{\lambda},\boldsymbol{\theta}), \label{deriv_f_d}
        \end{align}
where   $j\not= i$ and     $j=1,2.$  

   Using  (\ref{weightfunction}) and (\ref{deriv_f_d})  we conclude that 
 \begin{equation}\label{dsup}\sup_{[-\pi,\pi]^{2}\times \Theta}\left|w(\boldsymbol{\lambda}) \frac{\partial}{\partial
  \theta_{i}} f(\boldsymbol{\lambda},\boldsymbol{\theta})\right|<+\infty.\end{equation}
  Thus, by (\ref{eq_sigma_factorization}) and Lemma~\ref{lem1}  there exists \[\frac{\partial}{\partial
  \theta_{i}}\sigma^{2}(\boldsymbol{\theta})=
   \displaystyle\int_{[-\pi,\pi]^{2}}
w(\boldsymbol{\lambda}) \frac{\partial}{\partial
        \theta_{i}} f(\boldsymbol{\lambda},\boldsymbol{\theta}) \,\, d\boldsymbol{\lambda}\]
         and
\[\sup_{\Theta}\left|\frac{\partial}{\partial
  \theta_{i}}\sigma^{2}(\boldsymbol{\theta})\right|\leq 4\pi^2\sup_{[-\pi,\pi]^{2}\times \Theta}\left|w(\boldsymbol{\lambda}) \frac{\partial}{\partial
    \theta_{i}} f(\boldsymbol{\lambda},\boldsymbol{\theta})\right|<+\infty.\]
    
    It is not difficult to find 
    $\frac{\partial^{2}} {\partial \theta_{i} \partial \theta_{j}}
    f(\boldsymbol{\lambda},\boldsymbol{\theta}).$
    By (\ref{deriv_f_d}), for $i,j=1,2,$ $i\neq j,$ the second derivatives of $f$  are given by
    \begin{equation}\label{eqsdds3}
    \frac{\partial^{2}}{\partial \theta^{2}_{i}} f(\boldsymbol{\lambda},\boldsymbol{\theta
    }) = -2\log\left|2\cos\lambda_{i}-2u_{i}\right|\frac{\partial}{\partial \theta_{i}}
    f(\boldsymbol{\lambda},\boldsymbol{\theta}) = 4 \left(\log\left|2\cos\lambda_{i}-2u_{i}\right|\right)^{2} f(\boldsymbol{\lambda},\boldsymbol{\theta}),
    \end{equation}
    \begin{equation}\label{eqsdds4}
    \frac{\partial^{2}}{\partial \theta_{j} \partial \theta_{i}} f(\boldsymbol{\lambda},\boldsymbol{\theta
    })
    = 4\log\left|2\cos\lambda_{i}-2u_{i}\right|\cdot \log\left|2\cos\lambda_{j}-2u_{j}\right| f(\boldsymbol{\lambda},\boldsymbol{\theta}).
    \end{equation}
    
    It follows from (\ref{eqsdds3}), (\ref{eqsdds4}), and (\ref{weightfunction})  that  
    \begin{equation}\label{df2}\sup_{[-\pi,\pi]^{2}\times \Theta}\left|w(\boldsymbol{\lambda}) \frac{\partial^{2}}{\partial
        \theta_{j}\partial \theta_{i}} f(\boldsymbol{\lambda},\boldsymbol{\theta})\right|<+\infty,\quad i,j=1,2.\end{equation}
Finally, by (\ref{dtheta}) and Lemma~\ref{lem1}  there exists \[\frac{\partial^{2}}{\partial
            \theta_{j}\partial \theta_{i}}\sigma^{2}(\boldsymbol{\theta})=
         \displaystyle\int_{[-\pi,\pi]^{2}}
      w(\boldsymbol{\lambda}) \frac{\partial^{2}}{\partial
          \theta_{j}\partial \theta_{i}} f(\boldsymbol{\lambda},\boldsymbol{\theta}) \,\, d\boldsymbol{\lambda}\] and
    \begin{equation}\label{d2sigma}\sup_{\Theta}\left|\frac{\partial^{2}}{\partial
          \theta_{j}\partial \theta_{i}}\sigma^{2}(\boldsymbol{\theta})\right|\leq 4\pi^2\sup_{[-\pi,\pi]^{2}\times \Theta}\left|w(\boldsymbol{\lambda}) \frac{\partial^{2}}{\partial
              \theta_{j}\partial \theta_{i}} f(\boldsymbol{\lambda},\boldsymbol{\theta})\right|<+\infty.\vspace{-3mm}\end{equation}
  \hfill $\vspace{-3mm}\blacksquare$
\end{proof}

\noindent\begin{proof}\textit{of Theorem} \ref{Theorem_1}.
We will prove that the conditions \textbf{A1-A6} in the Appendix are satisfied. Therefore, we will be able to apply Theorem 3 by \citet{Anh:2004} and obtain the statement of Theorem~\ref{Theorem_1}. 

The condition \textbf{A1} holds, since $\boldsymbol{\theta}_{0}$ belongs to the parameter
  space $\Theta = \left(0,\frac{1}{2}\right)^{2}$
  which is an interior of the compact set $\left[0,\frac{1}{2}\right]^{2}$.

It follows from representation (\ref{spectral_density}) of the spectral density that  $f(\boldsymbol{\lambda
},\boldsymbol{\theta}_{1})\neq f(\boldsymbol{\lambda
},\boldsymbol{\theta}_{2}),$  for $\boldsymbol{\theta}_{1}\neq
\boldsymbol{\theta}_{2}.$ Thus, the condition {\bfseries A2} is satisfied.  

The class of non-negative weight functions $w(\boldsymbol{\lambda})$ defined by (\ref{weightfunction}) consists of  symmetric functions. Note that $$|\cos(\lambda_i)-\cos(\nu_i)|=2\left|\sin\left(\frac{\lambda_i+\nu_i}{2}\right)\sin\left(\frac{\lambda_i-\nu_i}{2}\right)\right|\sim C\,|\lambda_i^2-\nu_i^2|,$$ 
when $\lambda_i\to\pm \nu_i.$ 
Thus, by (\ref{weightfunction}) and representation (\ref{spectral_density})  of the spectral density
 we get  $w(\boldsymbol{\lambda})f(\boldsymbol{\lambda
},\boldsymbol{\theta})\in
    L_{1}([-\pi,\pi]^{2})$ for all $\boldsymbol{\theta}.$

To verify \textbf{A4}, that is, to prove  $$\nabla_{\boldsymbol{\theta}} \displaystyle\int_{[-\pi,\pi]^{2}}
     \Psi(\boldsymbol{\lambda},\boldsymbol{\theta}) w(\boldsymbol{\lambda})\,\, d\boldsymbol{\lambda} =
            \displaystyle\int_{[-\pi,\pi]^{2}} \nabla_{\boldsymbol{\theta}}
     \Psi(\boldsymbol{\lambda},\boldsymbol{\theta}) w(\boldsymbol{\lambda})\,\, d\boldsymbol{\lambda} =
     0,$$
we find
\begin{equation}\label{S2}w(\boldsymbol{\lambda}) \frac{\partial}{\partial
\theta_{i}}\Psi(\boldsymbol{\lambda},\boldsymbol{\theta}) 
=\underbrace{\frac{w(\boldsymbol{\lambda})}{\sigma^{2}(\boldsymbol{\theta})}  \left[\frac{\partial}{\partial
\theta_{i}} f(\boldsymbol{\lambda},\boldsymbol{\theta}) \right]}_{S_{1}(\boldsymbol{\lambda},\boldsymbol{\theta})}
 - \underbrace{\frac{w(\boldsymbol{\lambda})}{\sigma^{4}(\boldsymbol{\theta})}\left[\frac{\partial}{\partial
\theta_{i}} \sigma^{2}(\boldsymbol{\theta}) \right]
f(\boldsymbol{\lambda},\boldsymbol{\theta})}_{S_{2}(\boldsymbol{\lambda},\boldsymbol{\theta})}
\end{equation}
and apply Lemma 1.

The same symbol $C$ is used for different nonessential constants appearing in the calculations below.

By (\ref{spectral_density}) and the choice of the weight function we obtain 
\begin{equation}\label{fw2} \sup_{[-\pi,\pi]^{2}\times \Theta}\big|\log\left|2\cos\lambda_{i}-2u_{i}\right|\big|\,f(\boldsymbol{\lambda },\boldsymbol{\theta
})w(\boldsymbol{\lambda })<+\infty.\end{equation}

Therefore, by  equation  (\ref{deriv_f_d}) and Lemma~\ref{lem2}:
\begin{equation}\label{S1up}
|S_{1}(\boldsymbol{\lambda},\boldsymbol{\theta})| \leq \frac{
C}{\sigma^{2}(\boldsymbol{\theta})}\leq \frac{
C}{\min_{\Theta}\sigma^{2}(\boldsymbol{\theta})}  \in L_{1}
([-\pi,\pi]^{2}).\end{equation}

Now, by (\ref{fw1}), (\ref{S2}) and Lemma~\ref{lem2} we can estimate $S_{2}(\boldsymbol{\lambda},\boldsymbol{\theta})$ as
\begin{equation}\label{S2up} \left|S_{2}(\boldsymbol{\lambda},\boldsymbol{\theta})\right| =\left| w(\boldsymbol{\lambda})
\frac{\left[\frac{\partial}{\partial
\theta_{i}}\sigma^{2}(\boldsymbol{\theta}) \right]
f(\boldsymbol{\lambda},\boldsymbol{\theta})}
{\sigma^{4}(\boldsymbol{\theta})}\right|\leq C\, 
\frac{w(\boldsymbol{\lambda})f(\boldsymbol{\lambda},\boldsymbol{\theta})}
{\min_{\Theta}\sigma^{4}(\boldsymbol{\theta})}\le C.\end{equation} 

Finally, \textbf{A4} follows from (\ref{S1up}), (\ref{S2up}), and Lemma~\ref{lem1} with $g(x)=C.$

Note that $L_{1}([-\pi,\pi]^{2})\cap L_{2}([-\pi,\pi]^{2})= L_{2}([-\pi,\pi]^{2}).$ To verify the condition \textbf{A5} for the weight function $w(\cdot)$  we have to show that 
$f(\boldsymbol{\lambda },\boldsymbol{\theta }_{0})w(\boldsymbol{\lambda
    })\cdot$ $\log\Psi (\boldsymbol{\lambda
    }, \boldsymbol{\theta
    })\in  L_{2}([-\pi,\pi]^{2}),$ for all $\boldsymbol{\theta} \in \Theta .$
By (\ref{spectral_density}) and (\ref{weightfunction}) the product $f(\boldsymbol{\lambda},\boldsymbol{\theta}_{0})\cdot$ $ w(\boldsymbol{\lambda})
\log \Psi(\boldsymbol{\lambda},\boldsymbol{\theta})$ is bounded for all $\boldsymbol{\lambda}$ except $\{\boldsymbol{\lambda}: \lambda_i=\pm \nu_i,\ i=1,2\}.$ Let $\tilde{d}_i=\max ({d}_i,{d}_{i0}).$ Then, for
$\lambda_{i}\to \pm\nu_{i},$ $i=1,2:$ 
$$[f(\boldsymbol{\lambda},\boldsymbol{\theta}_0)
w(\boldsymbol{\lambda}) \log
\Psi(\boldsymbol{\lambda},\boldsymbol{\theta})]^{2}\leq
C\prod_{i=1}^{2}|{\lambda_i}\pm {\nu_i}|^{2a_i-4\tilde{d}_i}\log|\lambda_{i}\pm \nu_{i}|\in  L_{1}([-\pi,\pi]^{2}).
$$
Therefore, combining the above results we conclude that \textbf{A5} holds.

 To verify the condition \textbf{A6} we use the following function
$v(\boldsymbol{\lambda})= |\lambda_{1}^2-\nu_{1}^2|^{\beta}|\lambda_{2}^2-\nu_{2}^2|^{\beta},$ $\beta \in (0,1/2).$
Note that       \[\frac{|\lambda_{1}^2-\nu_{1}^2|^{\beta}}{|\lambda_{2}^2-\nu_{2}^2|^{-\beta}}\log f(\boldsymbol{\lambda},\boldsymbol{\theta})\sim C \frac{|\lambda_{1}^2-\nu_{1}^2|^{\beta}}{|\lambda_{2}^2-\nu_{2}^2|^{-\beta}}\left(d_1\log|\lambda_1^2-\nu_1^2|+d_2\log|\lambda_1^2-\nu_1^2|\right)\to 0,\]
when $\lambda_i\to\pm \nu_i.$ Thus, by the choice of $v(\cdot)$ and properties of $\sigma(\boldsymbol{\theta})$
     the function \[h(\boldsymbol{\lambda},\boldsymbol{\theta})= v(\boldsymbol{\lambda}) \log\Psi(\boldsymbol{\lambda},\boldsymbol{\theta})=|\lambda_{1}^2-\nu_{1}^2|^{\beta}|\lambda_{2}^2-\nu_{2}^2|^{\beta}\left(\log f(\boldsymbol{\lambda},\boldsymbol{\theta})-2\log \sigma(\boldsymbol{\theta})\right)\]
     is uniformly continuous on $[-\pi,\pi]^{2}\times \Theta .$
     
       Also, it holds
 \[\left|f(\boldsymbol{\lambda },\boldsymbol{\theta }_{0})
     \frac{w(\boldsymbol{\lambda })}{v(\boldsymbol{\lambda })}\right|\le Cv^{-1}(\boldsymbol{\lambda })\in  L_{2}([-\pi,\pi]^{2}).\]

   Since the conditions \textbf{A1}-\textbf{A6} are satisfied 
   Theorem 1 follows from Theorem 3 in \citet{Anh:2004}.\hfill $\blacksquare$
\end{proof}

\begin{proof} \textit{of Theorem} 2.
To prove the asymptotic normality of the MCE in Theorem~\ref{Theorem_2} we will show 
that the conditions \textbf{A7}-\textbf{A9} of the Appendix  hold.

We begin by proving  the condition \textbf{A7}.  First, to verify the twice differentiability of the function
     $\Psi(\boldsymbol{\lambda},\boldsymbol{\boldsymbol{\theta}})$
     on  $\Theta$ we formally compute the second-order derivatives of $\Psi:$
\begin{eqnarray}
 \frac{\partial^{2}} {\partial \theta_{j}\partial \theta_{i}}  \Psi(\boldsymbol{\lambda},\boldsymbol{\boldsymbol{\theta}}) &=&
 \frac{\partial} {\partial \theta_{j}} \left[ \frac{
 \left[\frac{\partial} {\partial \theta_{i}} f(\boldsymbol{\lambda},\boldsymbol{\theta})\right]  \sigma^{2}(\boldsymbol{\theta }) -
 \left[\frac{\partial} {\partial \theta_{i}} \sigma^{2}(\boldsymbol{\theta }) \right]
 f(\boldsymbol{\lambda},\boldsymbol{\theta})}{\sigma^{4}(\boldsymbol{\theta }) }
 \right] \nonumber\\
 &=&\frac{1}{\sigma^{4}(\boldsymbol{\theta })} \left\{
  \left[\frac{\partial^{2}} {\partial \theta_{j}\partial \theta_{i}}
  f(\boldsymbol{\lambda},\boldsymbol{\theta})\right]  \sigma^{2}(\boldsymbol{\theta }) +
 \left[\frac{\partial} {\partial \theta_{j}} \sigma^{2}(\boldsymbol{\theta })\right]
 \left[\frac{\partial} {\partial \theta}_{i} f(\boldsymbol{\lambda},\boldsymbol{\theta})\right] \right\} \nonumber\\
& -&\frac{1}{\sigma^{4}(\boldsymbol{\theta })} \left\{
 \left[\frac{\partial^{2}} {\partial \theta_{j}\partial \theta_{i}}
 \sigma^{2}(\boldsymbol{\theta }) \right] f(\boldsymbol{\lambda},\boldsymbol{\theta})-
 \left[\frac{\partial} {\partial \theta_{i}} \sigma^{2}(\boldsymbol{\theta }) \right]
 \left[\frac{\partial} {\partial \theta_{j}} f(\boldsymbol{\lambda},\boldsymbol{\theta})\right]  \right\}\nonumber \\
& -&\frac{1}{\sigma^{8}(\boldsymbol{\theta })} \left\{
\frac{\partial} {\partial \theta_{j}} \sigma^{4}(\boldsymbol{\theta
})\left( \left[\frac{\partial}{\partial \theta_{i}}
f(\boldsymbol{\lambda},\boldsymbol{\theta})\right]\sigma^{2}(\boldsymbol{\theta
})
 - \left[\frac{\partial} {\partial \theta_{i}}
\sigma^{2}(\boldsymbol{\theta }) \right]
f(\boldsymbol{\lambda},\boldsymbol{\theta}) \right)
\right\}.\nonumber
\end{eqnarray}
 
Note that in Lemma~\ref{lem2} we proved that the  derivatives $\frac{\partial}{\partial \theta_{i}}
\sigma^{2}(\boldsymbol{\theta }),$ $\frac{\partial^{2}} {\partial \theta_{j}\partial \theta_{i}}
 \sigma^{2}(\boldsymbol{\theta }),$ $\frac{\partial}
 {\partial \theta_{i}} f(\boldsymbol{\lambda},\boldsymbol{\theta}),$ and $\frac{\partial^2}
  {\partial \theta_{i}\partial \theta_{j}} f(\boldsymbol{\lambda},\boldsymbol{\theta})$ exist. 
Hence, by the above computations and Lemma~\ref{lem2} the function $\Psi $ is twice
differentiable  on  $\Theta.$

In addition,  by estimates (\ref{dsup}), (\ref{df2}), (\ref{d2sigma}),  Lemma~\ref{lem2}, and the above representation  for $ \frac{\partial^{2}} {\partial \theta_{j}\partial \theta_{i}}  \Psi(\boldsymbol{\lambda},\boldsymbol{\boldsymbol{\theta}})$ the product $w(\boldsymbol{\lambda}) f(\boldsymbol{\lambda},\boldsymbol{\theta}_{0})
           \frac{\partial^{2}}{\partial\theta_{i} \partial \theta_{j}}
                \log\Psi(\boldsymbol{\lambda},\boldsymbol{\theta})$ is bounded on $[-\pi,\pi]^{2}\times \Theta.$ Hence, for $ i,j=1,2,$ $\boldsymbol{\theta }\in \Theta:$
$$w(\boldsymbol{\lambda}) f(\boldsymbol{\lambda},\boldsymbol{\theta}_{0})
           \frac{\partial^{2}}{\partial\theta_{i} \partial \theta_{j}}
                \log\Psi(\boldsymbol{\lambda},\boldsymbol{\theta}) \in L_{1}([-\pi,\pi]^{2})\cap
                L_{2}([-\pi,\pi]^{2}). $$

To prove part 2 of the condition \textbf{A7}, we first note that by (\ref{equation_spectraldensity}) and (\ref{deriv_f_d}):
\[\frac{\partial}{\partial \theta_{i}}
                \log \Psi(\boldsymbol{\lambda},\boldsymbol{\theta}) =-2\log\left|2\cos\lambda_{i}-2u_{i}\right|-\frac{\frac{\partial}{\partial
                \theta_{i}}\sigma^{2}(\boldsymbol{\theta})}
                {\sigma^{4}(\boldsymbol{\theta})}. \]
By Lemma~\ref{lem2} the second term is bounded. Hence, it follows from (\ref{fw1}) and (\ref{fw2}) that the product  $w(\boldsymbol{\lambda})f(\boldsymbol{\lambda},\boldsymbol{\theta}_{0})\frac{\partial}{\partial \theta_{i}}
                \log \Psi(\boldsymbol{\lambda},\boldsymbol{\theta})$ is bounded on $[-\pi,\pi]^{2}\times \Theta,$ that implies
\[w(\boldsymbol{\lambda})f(\boldsymbol{\lambda},\boldsymbol{\theta}_{0})\frac{\partial}{\partial \theta_{i}}
                \log \Psi(\boldsymbol{\lambda},\boldsymbol{\theta}) \in L_{k}([-\pi,\pi]^{2}),\quad k\geq 1,\ \  i=1,2,
                \ \ \boldsymbol{\theta }\in \Theta .\]

To verify the condition {\bf A8} we first check the positive definiteness of the matrices $S(\boldsymbol{\theta })$ and $\mathbf{A}(\boldsymbol{\theta}).$

The entries of  $S(\boldsymbol{\theta })$ can be rewritten as
\[s_{i,j}(\boldsymbol{\theta})=\sigma^{2}(\boldsymbol{\theta
})\int_{[-\pi,\pi]^{2}}\Psi_w(\boldsymbol{\lambda},\boldsymbol{\theta})
\frac{\partial^{2}}{\partial \theta_{i}\partial \theta_{j}}\log
\Psi_w(\boldsymbol{\lambda},\boldsymbol{\theta})d\boldsymbol{\lambda},\]
where  $\Psi_w(\boldsymbol{\lambda},\boldsymbol{\theta})=f(\boldsymbol{\lambda },\boldsymbol{\theta })w(\boldsymbol{\lambda })/\sigma^{2}(\boldsymbol{\theta
}).$ 

By (\ref{spectral_density}), (\ref{weightfunction}),  and Lemma~\ref{lem2} the function  $\Psi_w(\boldsymbol{\lambda},\boldsymbol{\theta})$ is integrable, i.e. there is  a constant $C$ such that $\Psi_w(\boldsymbol{\lambda},\boldsymbol{\theta})/C$ is a density. Hence, 
$S(\boldsymbol{\theta })=C\sigma^{2}(\boldsymbol{\theta
}){\cal I}(\theta),$
where $\cal I$ is the Fisher information matrix of the random vector $\tilde{\boldsymbol{X}}$ with the density $\tilde{\Psi}_w(\boldsymbol{\lambda},\boldsymbol{\theta})=\Psi_w(\boldsymbol{\lambda},\boldsymbol{\theta})/\int_{[-\pi,\pi]^{2}}\Psi_w(\boldsymbol{\lambda},\boldsymbol{\theta})
d\boldsymbol{\lambda}.$ Therefore, $S(\boldsymbol{\theta })$ is non-negative definite.
Note that  ${\cal I}(\theta)=-\left(E\left(\tilde{Q}_i\tilde{Q}_j\right)\right)_{i,j=1,2},$ where $\tilde{Q}_i=\frac{\partial}{\partial \theta_{i}}
\tilde{\Psi}_w(\tilde{\boldsymbol{X}},\boldsymbol{\theta}).$ The random variables $\tilde{Q}_1$ and $\tilde{Q}_2$ are not a.s. linearly related which implies positive definiteness of $S(\boldsymbol{\theta })$ . 

The entries of  $A(\boldsymbol{\theta })$ can be rewritten as
\[a_{i,j}(\boldsymbol{\theta})=8\pi^{2} \sigma^{4}(\boldsymbol{\theta})
\displaystyle\int_{[-\pi,\pi]^{2}} w^{2}(\boldsymbol{\lambda})
\frac{\partial}{\partial \theta_{i}}
\Psi(\boldsymbol{\lambda},\boldsymbol{\theta})
 \frac{\partial}{\partial \theta_{j}} \Psi(\boldsymbol{\lambda},\boldsymbol{\theta})
 \, d\boldsymbol{\lambda} =
C\sigma^{4}(\boldsymbol{\theta})\,E\left(Q_iQ_j\right)
,\]
where $Q_i=\frac{\partial}{\partial \theta_{i}}
\Psi(\boldsymbol{X},\boldsymbol{\theta})$ and the random vector $\boldsymbol{X}$ has the density $\frac{w^2(\boldsymbol{\lambda })}{\int_{[-\pi,\pi]^{2}} w^{2}(\boldsymbol{\lambda})
 \, d\boldsymbol{\lambda}}.$ 

As  $\left(E\left(Q_iQ_j\right)\right)_{i,j=1,2}$ is a non-negative definite matrix, $A(\boldsymbol{\theta })$ is non-negative definite too. Moreover, 
it is positive definite, because  the random variables $Q_1$ and $Q_2$ are not a.s. linearly related. 

Now we compute elements of the matrix $S(\boldsymbol{\theta }).$ By (\ref{sij}) 
\begin{eqnarray}
s_{i,j}(\boldsymbol{\theta}) &=& \sigma^{2}(\boldsymbol{\theta
})\int_{[-\pi,\pi]^{2}}w(\boldsymbol{\lambda}) \left[
\frac{\partial^{2}}{\partial \theta_{i}\partial \theta_{j}}
\Psi(\boldsymbol{\lambda},\boldsymbol{\theta}) -
\frac{1}{\Psi(\boldsymbol{\lambda},\boldsymbol{\theta})}
  \frac{\partial}{\partial \theta_{i}} \Psi(\boldsymbol{\lambda},\boldsymbol{\theta})
  \frac{\partial}{\partial \theta_{j}} \Psi(\boldsymbol{\lambda},\boldsymbol{\theta})
\right]d\boldsymbol{\lambda}\nonumber\\
&=& \sigma^{2}(\boldsymbol{\theta
})\int_{[-\pi,\pi]^{2}}\left(w(\boldsymbol{\lambda}) \frac{\partial}
{\partial \theta_{j}} \left[ \frac{
 \left[\frac{\partial} {\partial \theta_{i}} f(\boldsymbol{\lambda},\boldsymbol{\theta})\right]  \sigma^{2}(\boldsymbol{\theta }) -
 \left[\frac{\partial} {\partial \theta_{i}} \sigma^{2}(\boldsymbol{\theta }) \right] f(\boldsymbol{\lambda},\boldsymbol{\theta})}
 { \sigma^{4}(\boldsymbol{\theta })}
 \right]\right.\nonumber\\
& &- \frac{w(\boldsymbol{\lambda})}{f(\boldsymbol{\lambda},\boldsymbol{\theta})\sigma^{6}(\boldsymbol{\theta }) }
 \left( \left[\frac{\partial}{\partial \theta_{i}} f(\boldsymbol{\lambda},\boldsymbol{\theta}) \right] \sigma^{2}(\boldsymbol{\theta }) -
\left[\frac{\partial}{\partial \theta_{i}} \sigma^{2}(\boldsymbol{\theta }) \right] f(\boldsymbol{\lambda},\boldsymbol{\theta}) \right)\nonumber\\
& &\left. \times \left( \left[\frac{\partial}{\partial
\theta_{j}} f(\boldsymbol{\lambda},\boldsymbol{\theta}) \right]
\sigma^{2}(\boldsymbol{\theta }) - \left[\frac{\partial}{\partial
\theta_{j}} \sigma^{2}(\boldsymbol{\theta })\right]
f(\boldsymbol{\lambda},\boldsymbol{\theta})
\right)\right)d\boldsymbol{\lambda}
 \nonumber
 \end{eqnarray}
  \begin{eqnarray}
&=&
\int_{[-\pi,\pi]^{2}}\left(\frac{w(\boldsymbol{\lambda})}{\sigma^{2}(\boldsymbol{\theta
})} \left(
  \left[\frac{\partial^{2}} {\partial \theta_{j}\partial \theta_{i}}
  f(\boldsymbol{\lambda},\boldsymbol{\theta})\right]  \sigma^{2}(\boldsymbol{\theta
}) +
 \left[\frac{\partial} {\partial \theta_{j}} \sigma^{2}(\boldsymbol{\theta
})\right]
 \left[\frac{\partial} {\partial \theta}_{i} f(\boldsymbol{\lambda},\boldsymbol{\theta})\right] \right)\right. \nonumber\\
& &
 -\frac{w(\boldsymbol{\lambda})}{\sigma^{2}(\boldsymbol{\theta
})} \left(
 \left[\frac{\partial^{2}} {\partial \theta_{j}\partial \theta_{i}}
  \sigma^{2}(\boldsymbol{\theta
}) \right] f(\boldsymbol{\lambda},\boldsymbol{\theta})-
 \left[\frac{\partial} {\partial \theta_{i}} \sigma^{2}(\boldsymbol{\theta
}) \right]
 \left[\frac{\partial} {\partial \theta_{j}} f(\boldsymbol{\lambda},\boldsymbol{\theta})\right]  \right)\nonumber \\
& & 
-2\frac{w(\boldsymbol{\lambda})}{\sigma^{6}(\boldsymbol{\theta
})}\left( \frac{\partial} {\partial \theta_{j}}
\sigma^{2}(\boldsymbol{\theta }) \left( \left[\frac{\partial}
{\partial \theta_{i}}
f(\boldsymbol{\lambda},\boldsymbol{\theta})\right]\sigma^{2}(\boldsymbol{\theta
}) - \left[\frac{\partial} {\partial \theta_{i}}
\sigma^{2}(\boldsymbol{\theta }) \right]
f(\boldsymbol{\lambda},\boldsymbol{\theta})
\right) \right)\nonumber \\
& & -
\frac{w(\boldsymbol{\lambda})}{\sigma^{4}(\boldsymbol{\theta
})f(\boldsymbol{\lambda},\boldsymbol{\theta})}
\left(\left[\frac{\partial}{\partial \theta_{i}}
f(\boldsymbol{\lambda},\boldsymbol{\theta}) \right]
\sigma^{2}(\boldsymbol{\theta }) - \left[\frac{\partial}{\partial
\theta_{i}}\sigma^{2}(\boldsymbol{\theta }) \right]
f(\boldsymbol{\lambda},\boldsymbol{\theta})\right)
 \nonumber\\
& & \times \left(\left[\frac{\partial}{\partial \theta_{j}} f(\boldsymbol{\lambda},\boldsymbol{\theta}) \right]
\sigma^{2}(\boldsymbol{\theta }) - \left[\frac{\partial}{\partial
\theta_{j}} \sigma^{2}(\boldsymbol{\theta }) \right]
f(\boldsymbol{\lambda},\boldsymbol{\theta})\right)d\boldsymbol{\lambda}. \nonumber
\end{eqnarray}
By (\ref{eq_sigma_factorization}), (\ref{dtheta}), (\ref{d2theta}), and (\ref{deriv_f_d}) we obtain
\[s_{i,j}(\boldsymbol{\theta})=3\int_{[-\pi,\pi]^{2}}\frac{w(\boldsymbol{\lambda})}
{\sigma^{2}(\boldsymbol{\theta })} \left[\frac{\partial} {\partial
\theta_{j}} \sigma^{2}(\boldsymbol{\theta }) \right]
 \left[\frac{\partial} {\partial \theta}_{i} f(\boldsymbol{\lambda},\boldsymbol{\theta})\right]d\boldsymbol{\lambda}\]
 \[-\int_{[-\pi,\pi]^{2}}\frac{w(\boldsymbol{\lambda})}{f(\boldsymbol{\lambda},\boldsymbol{\theta})}
 \left[\frac{\partial}{\partial \theta_{i}}
 f(\boldsymbol{\lambda},\boldsymbol{\theta}) \right]
 \left[\frac{\partial}{\partial \theta_{j}} f(\boldsymbol{\lambda},\boldsymbol{\theta}) \right]d\boldsymbol{\lambda}= \frac{3}{\sigma^{2}(\boldsymbol{\theta })} \left[\frac{\partial} {\partial
  \theta_{j}} \sigma^{2}(\boldsymbol{\theta }) \right] \left[\frac{\partial} {\partial
  \theta_{i}} \sigma^{2}(\boldsymbol{\theta }) \right]\]
   \[-4\int_{[-\pi,\pi]^{2}} \log\left|2\cos\lambda_{i}-2u_{i}\right|\log\left|2\cos\lambda_{j}-2u_{j}\right|\, w(\boldsymbol{\lambda}) f(\boldsymbol{\lambda},\boldsymbol{\theta})d\boldsymbol{\lambda}. \]

By (\ref{aij}) the elements of the matrix $A(\boldsymbol{\theta })$ are
\begin{eqnarray}
a_{i,j}(\boldsymbol{\theta}) &=& 8\pi^2\sigma^{4}(\boldsymbol{\theta
})\int_{[-\pi,\pi]^{2}}w^2(\boldsymbol{\lambda}) 
  \frac{\partial}{\partial \theta_{i}} \Psi(\boldsymbol{\lambda},\boldsymbol{\theta})
  \frac{\partial}{\partial \theta_{j}} \Psi(\boldsymbol{\lambda},\boldsymbol{\theta})
\,d\boldsymbol{\lambda}\nonumber\\
&=& 8\pi^2 \int_{[-\pi,\pi]^{2}}w^2(\boldsymbol{\lambda}) 
\left(\left[\frac{\partial}{\partial \theta_{i}}
f(\boldsymbol{\lambda},\boldsymbol{\theta}) \right]
\sigma^{2}(\boldsymbol{\theta }) - \left[\frac{\partial}{\partial
\theta_{i}}\sigma^{2}(\boldsymbol{\theta }) \right]
f(\boldsymbol{\lambda},\boldsymbol{\theta})\right)
 \nonumber\\
& & \times \left(\left[\frac{\partial}{\partial \theta_{j}} f(\boldsymbol{\lambda},\boldsymbol{\theta}) \right]
\sigma^{2}(\boldsymbol{\theta }) - \left[\frac{\partial}{\partial
\theta_{j}} \sigma^{2}(\boldsymbol{\theta }) \right]
f(\boldsymbol{\lambda},\boldsymbol{\theta})\right)\,d\boldsymbol{\lambda}. \nonumber
\end{eqnarray}
Hence, by (\ref{deriv_f_d}) we get
$a_{i,j}(\boldsymbol{\theta}) =S_1-S_2(i,j)-S_2(j,i)+S_3,$
where
\[
S_1= 32\pi^2\sigma^{4}(\boldsymbol{\theta
})\int_{[-\pi,\pi]^{2}}\log\left|2\cos\lambda_{i}-2u_{i}\right|\log\left|2\cos\lambda_{j}-2u_{j}\right|\, w^2(\boldsymbol{\lambda}) f^2(\boldsymbol{\lambda},\boldsymbol{\theta})d\boldsymbol{\lambda},\vspace{-4mm}\]
\begin{eqnarray} S_2(i,j) &=& 16\pi^2\sigma^{2}(\boldsymbol{\theta
})\left[\frac{\partial}{\partial
\theta_{j}} \sigma^{2}(\boldsymbol{\theta }) \right]\int_{[-\pi,\pi]^{2}}\log\left|2\cos\lambda_{i}-2u_{i}\right|\, w^2(\boldsymbol{\lambda}) f^2(\boldsymbol{\lambda},\boldsymbol{\theta})d\boldsymbol{\lambda}, \nonumber\\
S_3 &=& 8\pi^2\left[\frac{\partial}{\partial
\theta_{j}} \sigma^{2}(\boldsymbol{\theta }) \right]\left[\frac{\partial}{\partial
\theta_{j}} \sigma^{2}(\boldsymbol{\theta }) \right]\int_{[-\pi,\pi]^{2}} w^2(\boldsymbol{\lambda}) f^2(\boldsymbol{\lambda},\boldsymbol{\theta})d\boldsymbol{\lambda}.\nonumber\end{eqnarray}

The proof of the condition {\bf A9} is based on the approach in \citet{Bentkus:1972}. Notice, that by (\ref{weightfunction}) there exists  a factorization $w(\boldsymbol{\lambda})=w_1(\boldsymbol{\lambda})\cdot w_2(\boldsymbol{\lambda})$ of $w(\boldsymbol{\lambda})$ such that both $\tilde{f}(\boldsymbol{\lambda},\boldsymbol{\theta}_0)=f(\boldsymbol{\lambda},\boldsymbol{\theta}_0)w_1(\boldsymbol{\lambda})$  and $w_2(\boldsymbol{\lambda})$ are bounded functions of~$\boldsymbol{\lambda}.$ For example, one can select the function $w_1(\boldsymbol{\lambda})$ to be equal a product of $|\lambda_1-\nu_1|^{2d_1}|\lambda_2-\nu_2|^{2d_2}$ and a positive smooth function on $[-\pi,\pi]^{2}.$ Let us denote $\tilde{w}_2(\boldsymbol{\lambda},\boldsymbol{\theta})=w_2(\boldsymbol{\lambda})\frac{\partial}{\partial \theta_{i}} \log
\Psi(\boldsymbol{\lambda},\boldsymbol{\theta}).$ Then, 
\[T\displaystyle\int_{[-\pi,\pi]^2}
(EI^{*}_{T}(\boldsymbol{\lambda})-f(\boldsymbol{\lambda},\boldsymbol{\theta}_0)
) w(\boldsymbol{\lambda}) \frac{\partial}{\partial \theta_{i}} \log
\Psi(\boldsymbol{\lambda},\boldsymbol{\theta}) \,
d\boldsymbol{\lambda} =T\displaystyle\int_{[-\pi,\pi]^2}
(EI^{*}_{T}(\boldsymbol{\lambda})w_1(\boldsymbol{\lambda})\]
\[-\tilde{f}(\boldsymbol{\lambda},\boldsymbol{\theta}_0)
)\tilde{w}_2(\boldsymbol{\lambda},\boldsymbol{\theta})\,
d\boldsymbol{\lambda} =T\displaystyle\int_{[-\pi,\pi]^2}
(EI^{*}_{T}(\boldsymbol{\lambda})w_1(\boldsymbol{\lambda})-E\tilde{I}^{*}_{T}(\boldsymbol{\lambda})
)\tilde{w}_2(\boldsymbol{\lambda},\boldsymbol{\theta})\,
d\boldsymbol{\lambda}\]
\begin{equation}\label{integ}+T\displaystyle\int_{[-\pi,\pi]^2}
(E\tilde{I}^{*}_{T}(\boldsymbol{\lambda})-\tilde{f}(\boldsymbol{\lambda},\boldsymbol{\theta}_0)
)\tilde{w}_2(\boldsymbol{\lambda},\boldsymbol{\theta})\,
d\boldsymbol{\lambda},\end{equation}
where $\tilde{I}^{*}_{T}(\boldsymbol{\lambda})$ denotes the unbiased  periodogram of the  random field with the spectral density~$\tilde{f}(\boldsymbol{\lambda},\boldsymbol{\theta}_0).$ 

Notice that  $\tilde{f}(\boldsymbol{\lambda},\boldsymbol{\theta}_0)$ is bounded. Hence, by the first statement of Proposition~2 in \citet{Guyon:1982} the last integral in (\ref{integ}) is ${\cal O}(T^{-2})$ and the second term in (\ref{integ}) vanishes when $T\to \infty.$  Therefore, to prove the condition {\bf A9}, it is enough to show that the first term in (\ref{integ}) vanishes too.

Let $\tilde{\gamma}(\boldsymbol{t},\boldsymbol{\theta}_0)$ denote the auto-covariance function of the random field with the spectral density~$\tilde{f}(\boldsymbol{\lambda},\boldsymbol{\theta}_0).$ By multidimensional Parseval's theorem, see  \citet{Brychkov:1992}, we get
\[\displaystyle\int_{[-\pi,\pi]^2}
(EI^{*}_{T}(\boldsymbol{\lambda})w_1(\boldsymbol{\lambda})-E\tilde{I}^{*}_{T}(\boldsymbol{\lambda})
)\tilde{w}_2(\boldsymbol{\lambda},\boldsymbol{\theta})\,
d\boldsymbol{\lambda}\]
\[= \frac{1}{(2\pi)^{2}}\displaystyle\int_{[-\pi,\pi]^2}
\Bigl(w_1(\boldsymbol{\lambda})
\sum_{t_{1}=1-T}^{T-1}\sum_{t_{2}=1-T}^{T-1} e^{-
i(\lambda_{1}t_{1}+\lambda_{2}t_{2})} {\gamma}(\boldsymbol{t},\boldsymbol{\theta}_0)\]
\[-\sum_{t_{1}=1-T}^{T-1}\sum_{t_{2}=1-T}^{T-1} e^{-
i(\lambda_{1}t_{1}+\lambda_{2}t_{2})} \tilde{\gamma}(\boldsymbol{t},\boldsymbol{\theta}_0)\Bigr) \tilde{w}_2(\boldsymbol{\lambda},\boldsymbol{\theta})\,
d\boldsymbol{\lambda}
\]
\[=
\frac{1}{(2\pi)^{2}}\displaystyle\int_{[-\pi,\pi]^2}
\Bigl(w_1(\boldsymbol{\lambda})
\sum_{(t_{1},t_{2})\in \mathbb{Z}^2} e^{-
i(\lambda_{1}t_{1}+\lambda_{2}t_{2})} {\gamma}(\boldsymbol{t},\boldsymbol{\theta}_0)I_{[1-T,T-1]^2}(t_{1},t_{2})
\]\[-\sum_{(t_{1},t_{2})\in \mathbb{Z}^2} e^{-
i(\lambda_{1}t_{1}+\lambda_{2}t_{2})} \tilde{\gamma}(\boldsymbol{t},\boldsymbol{\theta}_0)I_{[1-T,T-1]^2}(t_{1},t_{2})\Bigr)\tilde{w}_2(\boldsymbol{\lambda},\boldsymbol{\theta})\,
d\boldsymbol{\lambda}
\]
\[=\displaystyle\int_{[-\pi,\pi]^2}
\Bigl(w_1(\boldsymbol{\lambda})\displaystyle\int_{[-\pi,\pi]^2}
{f}(\boldsymbol{x},\boldsymbol{\theta}_0)\Phi_{T-1}(\boldsymbol{\lambda}-\boldsymbol{x})\,d\boldsymbol{x}\]
\[
-\int_{[-\pi,\pi]^2}
\tilde{f}(\boldsymbol{x},\boldsymbol{\theta}_0)\Phi_{T-1}(\boldsymbol{\lambda}-\boldsymbol{x})\,d\boldsymbol{x}\Bigr)\tilde{w}_2(\boldsymbol{\lambda},\boldsymbol{\theta})\,
d\boldsymbol{\lambda}
\]
\begin{equation}\label{secint} =\displaystyle\int_{[-\pi,\pi]^2}
\Bigl(\displaystyle\int_{[-\pi,\pi]^2}
{f}(\boldsymbol{x}+\boldsymbol{\lambda},\boldsymbol{\theta}_0)\Phi_{T-1}(\boldsymbol{x})(w_1(\boldsymbol{\lambda})-w_1(\boldsymbol{\lambda}+\boldsymbol{x}))\,d\boldsymbol{x}\Bigr)\tilde{w}_2(\boldsymbol{\lambda},\boldsymbol{\theta})\,
d\boldsymbol{\lambda},
\end{equation}
where $\Phi_{T-1}(\boldsymbol{x})=\frac{1}{(2\pi(T-1))^2}\left(\frac{\sin ((T-1)x_1/2)}{\sin (x_1/2)}\right)^2\left(\frac{\sin ((T-1)x_2/2)}{\sin (x_2/2)}\right)^2$ is the Fej\'{e}r kernel, $I_{[1-T,T-1]^2}(t_{1},t_{2})$ is the indicator function of the cube $[1-T,T-1]^2.$

Let $f(\boldsymbol{\lambda},\boldsymbol{\theta}_0)\in L^2([-\pi,\pi]^2),$ i.e. $(d_1,d_2)\in (0,1/4)^2.$ Then, one can refine the approach in Theorems~2.1  and~2.2 by  \citet{Bentkus:1972} for the two-dimensional case with $p=2.$ Namely, let us split the inner integral in (\ref{secint}) into two parts: the first integral is over the region $A_\alpha:=\{(x_1,x_2): |x_1|\le T^{-\alpha}, |x_2|\le T^{-\alpha}\}$ and the second integral is over $[\pi,\pi]\setminus A_{\alpha}.$ 

For $\alpha \in (2\max(d_1,d_2),1/2)$, by the choice of $w_1(\boldsymbol{\lambda})$ and
\[\int_{[\pi,\pi]\setminus A_\alpha}\Phi_{T-1}(\boldsymbol{x})\,d\boldsymbol{x}=\left(\frac{1}{{\pi(T-1)}}\int_{T^{-\alpha}}^\pi\left(\frac{\sin ((T-1)x_1/2)}{\sin (x_1/2)}\right)^2 dx_1\right)^2\]
\[\le \frac{1}{\pi^2(T-1)^2}\left(\int_{T^{-\alpha}}^\pi\frac{dx_1}{\left(\sin (x_1/2)\right)^2}\right)^2 \sim \frac{C}{T^{2\left(1-\alpha\right)}}, \quad T\to \infty ,\]
 the both above integrals are bounded by $C\varepsilon_T/T,$ where $\varepsilon_T\to 0$ when $T\to \infty.$   It implies that first term in (\ref{integ}) vanishes and completes the proof of the condition {\bf A9}.\hfill $\blacksquare$
\end{proof}

\section{SIMULATION STUDIES}

In this section we present some numerical results to confirm the theoretical findings.

Figure~\ref{fig3a} demonstrates a series of box plots to characterize the sample distribution of MCEs of the parameters $d_i,$ $i=1,2,$ as a function of $T.$  To compute it Monte Carlo simulations of the Gegenbauer field  with 100 replications for each $T=10, 20, 30, 40, 50$ were performed.
For  the parameters $u_1=0.4,$ $u_2= 0.3,$ $d_1= 0.2,$ $d_2= 0.3,$  and $\sigma^{2}_{\varepsilon}=1$ realizations of $Y_{t_1,t_2}$ were simulated using the truncated sum $\sum_{n_{1}=0}^{40} \sum_{n_{2}=0}^{40}$ in (\ref{gegenb}). For example,  a realization of the Gegenbauer random field on a $100\times100$ grid is shown in Figure~\ref{fig1}. We set the parameter values of the weight function in (\ref{weightfunction}) to $a_1=a_2=2$ and $w_0(\boldsymbol{\lambda})\equiv 1.$ The periodogram $I_{T}(\boldsymbol{\lambda})$ was computed and the minimizing argument $\hat{\boldsymbol{\theta}}_{T}$ of the functional  $\hat{U}_{T}(\boldsymbol{\theta})$ was found numerically for each simulation. 
 Figure~\ref{fig3a} demonstrates that the sample distribution of $\hat{\boldsymbol{\theta}}_{T}$ converges to ${\boldsymbol{\theta}}_{0}$  as $T$ increases. The plot of the sample probabilities $P_0(|\hat{\boldsymbol{\theta}}_{T}-{\boldsymbol{\theta}}_{0}|<\varepsilon)$ in Figure~\ref{fig3b} also confirms convergence in probability of $\hat{\boldsymbol{\theta}}_{T}$ to ${\boldsymbol{\theta}}_{0}.$ 
 \begin{figure}
  \begin{center}
\begin{minipage}{5.4cm}
                 \includegraphics[width=\textwidth]{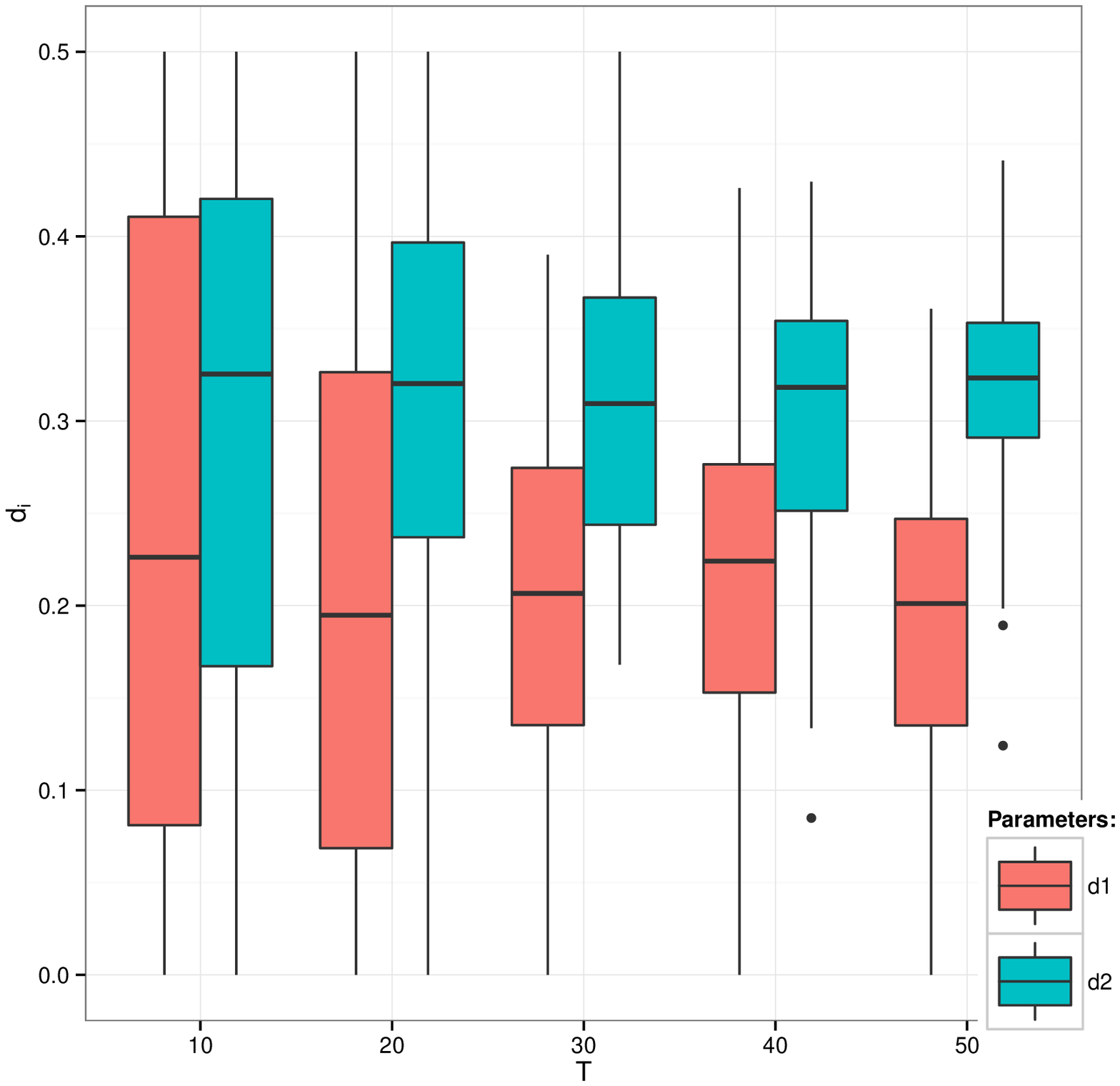}
                 \caption{Boxplots of sampled values of $\hat{\boldsymbol{\theta}}_{T}.$}
                 \label{fig3a}
         \end{minipage}%
         \quad \quad
         \begin{minipage}{5.4cm}
                 \includegraphics[width=\textwidth]{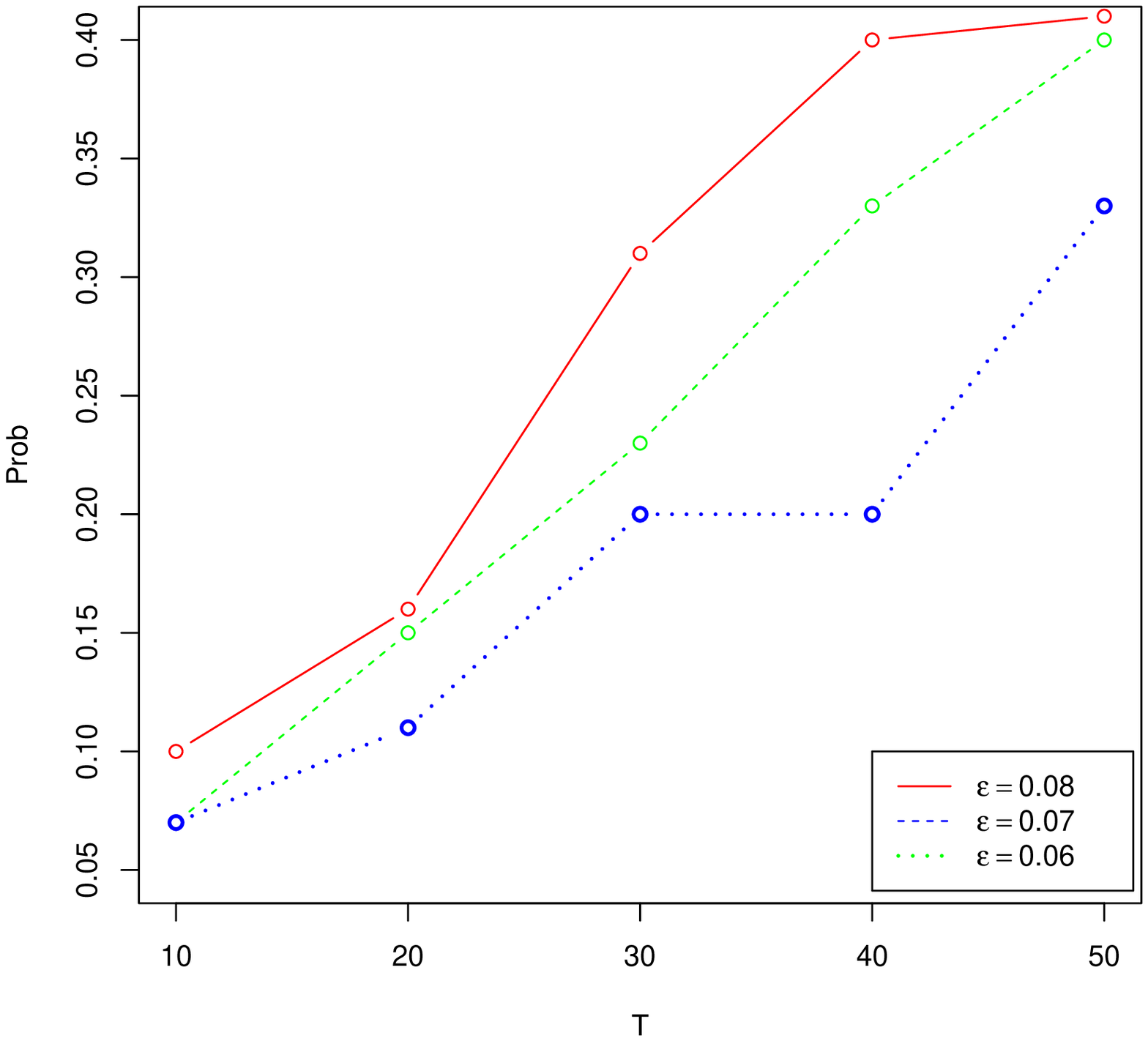}
                 \caption{Sample probabilities $P_0(|\hat{\boldsymbol{\theta}}_{T}-{\boldsymbol{\theta}}_{0}|<\varepsilon).$}
                 \label{fig3b}
         \end{minipage}
  \end{center}\vspace{-5mm}
\end{figure}

For each generated realization we also computed the value of $\hat{\sigma}^{2}_{T}$ using $I_{T}(\boldsymbol{\lambda}).$ Analogously to Figures~\ref{fig3a},~\ref{fig3b}, plots in Figures~\ref{fig4a},~\ref{fig4b} support convergence in probability of $\hat{\sigma}^{2}_{T}$ to $\sigma^{2}(\boldsymbol{\theta_{0}})$ when $T$ increases. Notice that by (\ref{eq_sigma_factorization})   we get $\sigma^{2}(\boldsymbol{\theta_{0}})\approx 74.736$ for the selected parameters. The larger values of $\varepsilon$ in Figure~\ref{fig4b} comparing to Figure~\ref{fig3b} are due to the difference in the scales for the parameters (small values measured in decimals) and variances (large values measured in tens).

 \begin{figure}
  \begin{center}
\begin{minipage}{5.4cm}
                 \includegraphics[width=\textwidth]{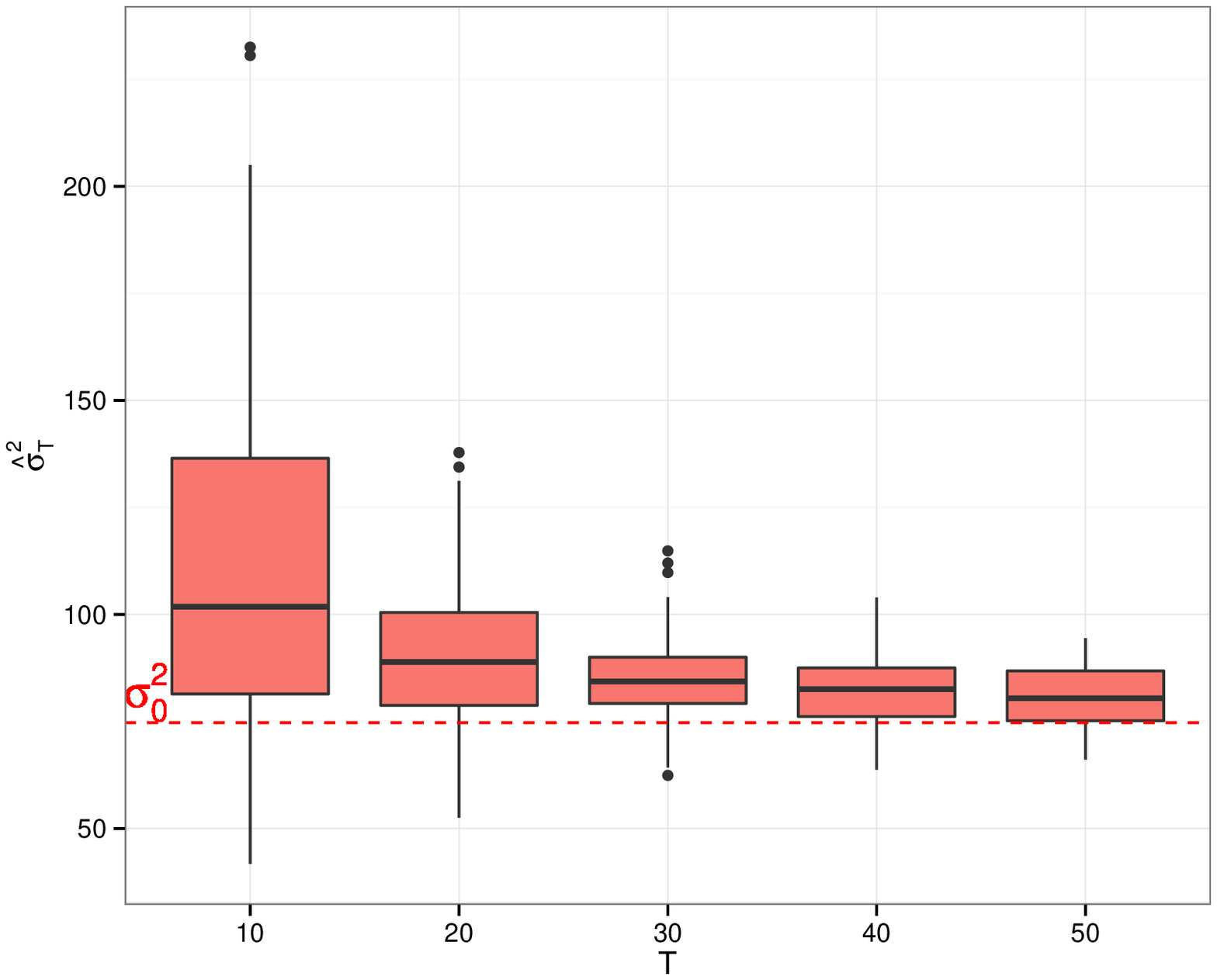}
                 \caption{Boxplots of sampled values of $\hat{\sigma}^{2}_{T}.$}
                 \label{fig4a}
         \end{minipage}%
         \quad
\begin{minipage}{5.4cm}
                 \includegraphics[width=\textwidth, trim=0 20 0 0, clip=true]{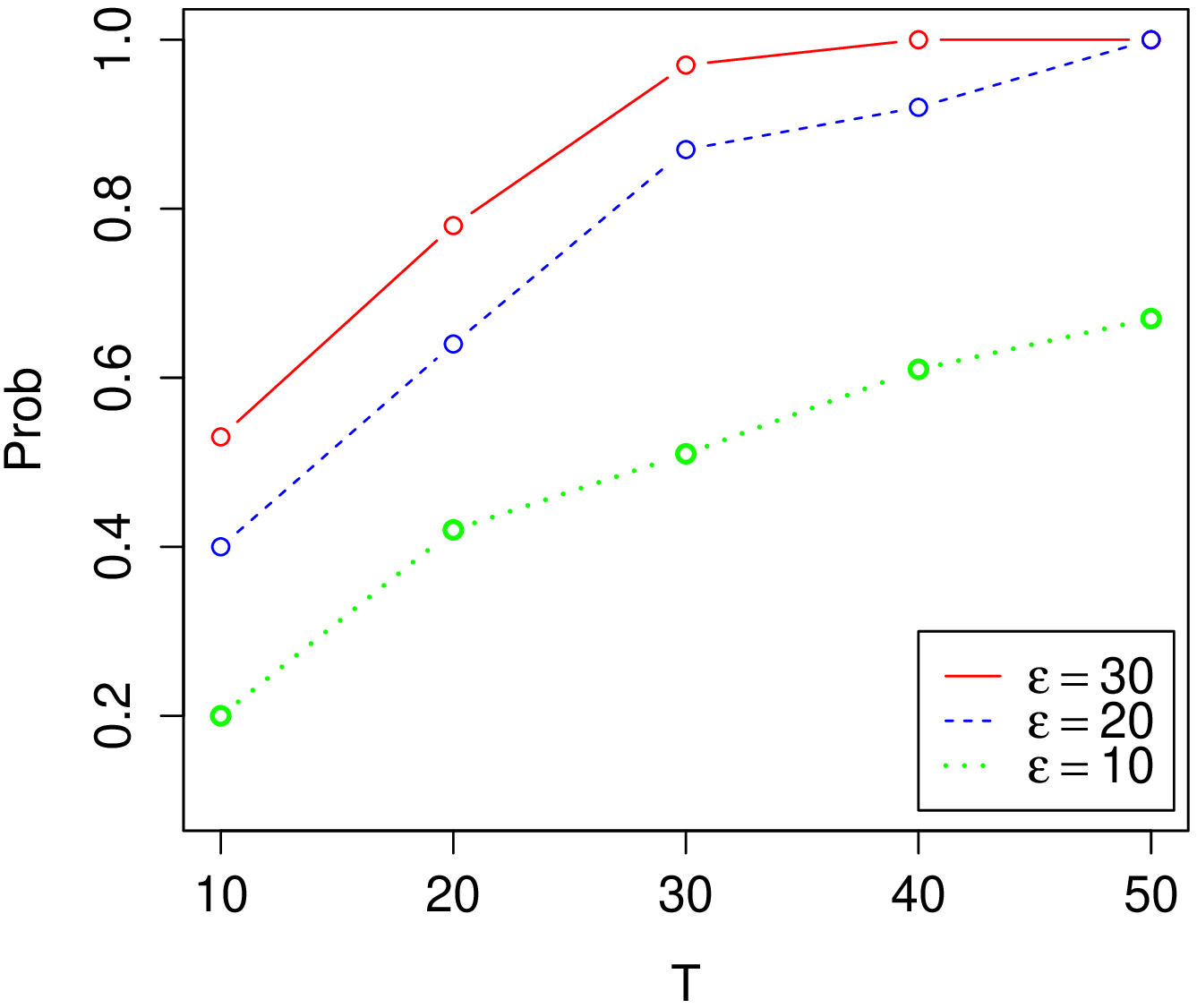}
                 \caption{Sample probabilities $P_0(|\hat{\sigma}^{2}_{T}-\sigma^{2}(\boldsymbol{\theta_{0}})|<\varepsilon).$}
                 \label{fig4b}
         \end{minipage}
  \end{center}
\end{figure}
 
To verify the result of Theorem~\ref{Theorem_2} we used sample values   $\hat{\boldsymbol{\theta}}^*_{50}$ which minimized the functional $\hat{U}_{T}^{*}(\boldsymbol{\theta})$ for each simulation. To avoid possible negative values the modified periodogram $I^{*}_{T}(\boldsymbol{\lambda})$ was truncated at zero by the R program. Bearing in mind the edge effect and  modified periodogram's correction, Figures~\ref{fig5a},~\ref{fig5b}  demonstrate that  the results are close to the expected ones even for the relatively small $T=50.$  The normal Q-Q plot of each component of $\hat{\boldsymbol{\theta}}_{50}^*$ in Figures~\ref{fig5b}   matches  with the theoretical normal distribution.
To test the bivariate normality hypothesis about $\hat{\boldsymbol{\theta}}_{50}^*$ we used the Shapiro-Wilk, energy, and  kurtosis tests of multivariate normality from the R packages {\sc mvnormtest, energy,} and {ICS}. In all the tests, p-values ($0.9491,$ $0.4605,$ and $0.5314$) confirmed that $\hat{\boldsymbol{\theta}}_{T}^*$ asymptotically follows a bivariate normal distribution. Simulations for other values of the parameters were run, with similar results.

 \begin{figure}
  \begin{center}
\begin{minipage}{5.4cm}
                 \includegraphics[width=\textwidth]{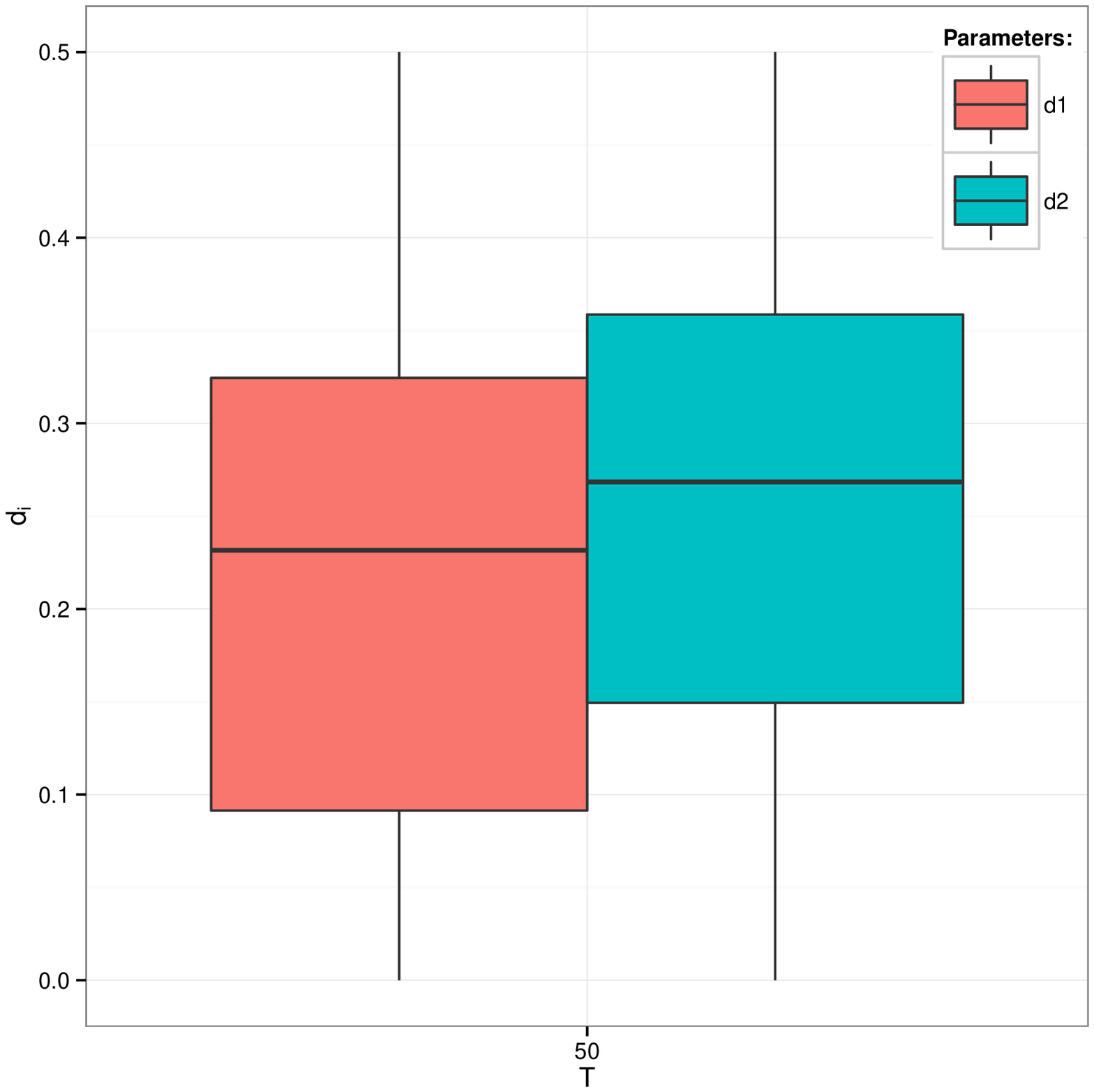}
                 \caption{Boxplots of sampled values of $\hat{\boldsymbol{\theta}}_{50}^*.$}
                 \label{fig5a}
         \end{minipage}%
         \quad \quad
\begin{minipage}{5.4cm}
                 \includegraphics[width=\textwidth]{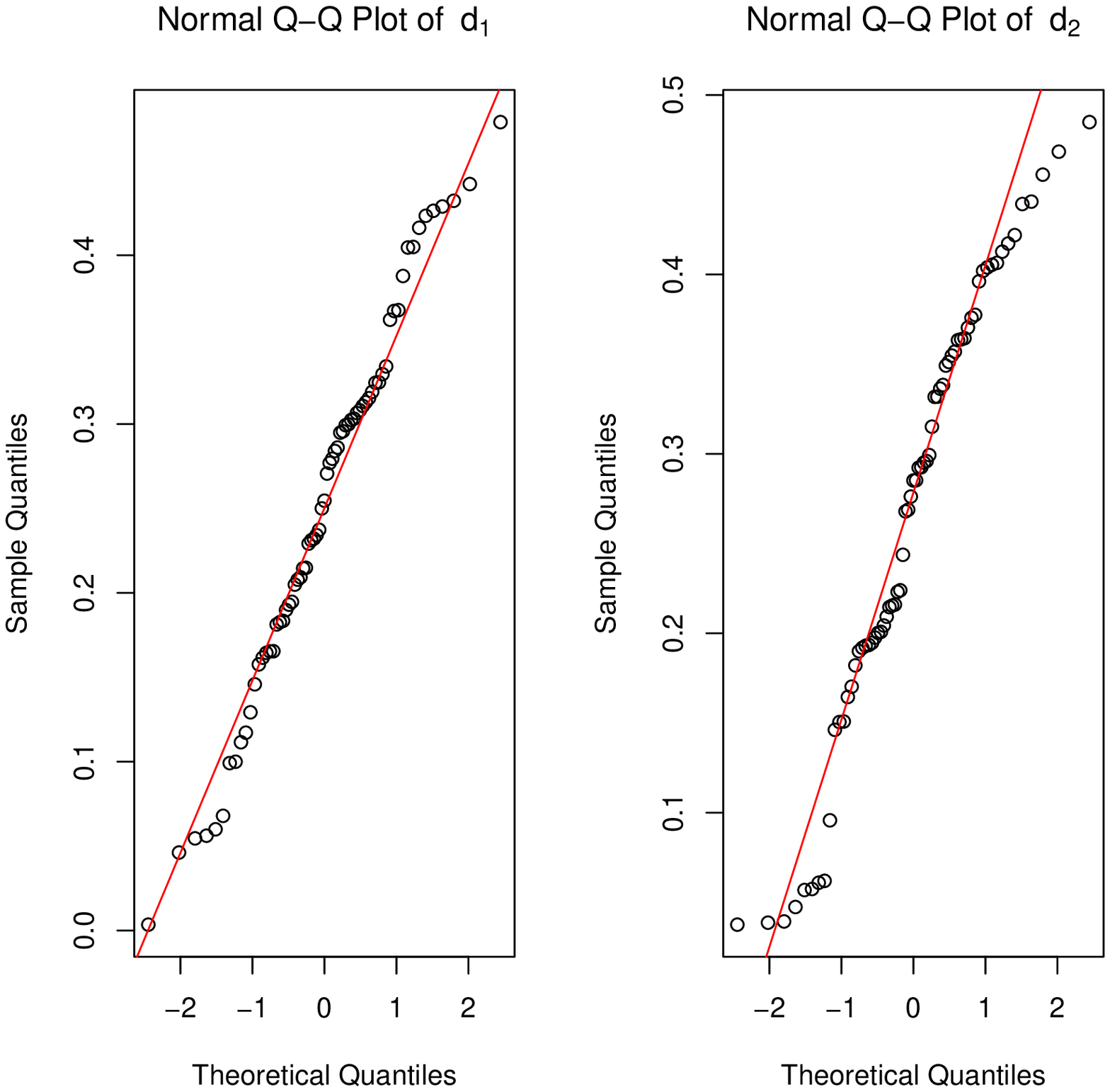}
                 \caption{Normal Q-Q plots for each component of $\hat{\boldsymbol{\theta}}^*_{50}.$}
                 \label{fig5b}
         \end{minipage}
  \end{center}
\end{figure}

Hence, we conclude that the MCEs are consistent estimators and the distributions of $\hat{\boldsymbol{\theta}}_{T}^*$
converge to the bivariate normal law. Note that the simulation studies not only comply with the obtained results  for $(d_1,d_2)\in (0,1/4)^2,$  but also indicate that the theoretical results may be extended to all possible values of $(d_1,d_2)$ in  $(0,1/2)^2.$

\section{DIRECTIONS FOR FUTURE RESEARCH}
The estimation methodology based on the unbiased periodogram was
introduced in \citet{Guyon:1982,Guyon:1995}, see also \citet{Heyde:1993}.
Recently, the paper by \citet{Robinson:2006} and the references therein provided a detailed discussion on the topic. It studied mainly the difficulties arising in the application of the methodology in high dimensions. In particular, they investigated  problems
arising in relation to non-uniformly increasing domain asymptotics
associated with different expansion rates of the studied domain in each spatial direction. In this paper, we considered the case $d=2$ and restricted out attention to the case
of  uniformly increasing domain asymptotics.  The case of non-uniformly increasing domain asymptotics is left for future investigations. 

An extended version of the derived results can be obtained for more
general formulations of the unbiased periodogram. In particular,
different growing rates can be allowed for each spatial dimension in the definition of the sampling area.  For example, one can consider the following  generalized
version  of the two-dimensional unbiased periodogram (see, for
example, \citet{Robinson:2006})\vspace{-3mm}
\[
I_{g}(\lambda_{1},\lambda_{2})= \frac{1}{(2\pi)^{2}}
\sum_{t_{1}=1-g_{1}(T)}^{g_{1}(T)-1}\sum_{t_{2}=1-g_{2}(T)}^{g_{2}(T)-1}
e^{- i(\lambda_{1}t_{1}+\lambda_{2}t_{2})}
\hat{\gamma}_{T}(\boldsymbol{t}), \nonumber
\]
\noindent \nopagebreak where the functions $g_{i}(T),$ $ i=1,2,$ satisfy some suitable conditions (for example, $g_{i}(T)\rightarrow \infty,$ $T\rightarrow \infty,$  
and $g_{i}(T)\leq C T,$ $C<1,$ for $i=1,2,$ and sufficiently large  $T$).

An important area for future explorations is to extend the results of \citet{Anh:2004} and simultaneously estimate the locations of singularities and long-range dependence parameters using the MCE methodology.  A feasible way to approach this problem would be relaxing the $L_2$-integrability assumptions in conditions {\bf A5-A8.}

It also would be interesting to extend the methodology by \citet{Bentkus:1972} to prove the condition {\bf A9} for all $(d_1,d_2)$ in  $(0,1/2)^2.$

Note that our simulation results show that the proposed minimum
contrast estimation methodology works in the case of 
uniformly increasing domain asymptotics. 

\

\noindent\textbf{Supplementary Materials}
The codes used for simulations in this paper are available from the site 
\verb! https://googledrive.com/host/!

\noindent
\verb!0B7UxM8o_bnBxdG9zNU9MdHFOQUU/MCE%20Gegenbauer%20fields/!

\begin{acknowledgements}
The authors are grateful for the referee's careful reading of the paper and many detailed comments and suggestions, which helped to improve the paper. Also, this work has been supported in part by projects  MTM2012-32674 of the DGI, MEC, and P09-FQM-5052 of the Andalousian CICE, Spain.  A.Olenko was partially supported by the 2013 LTU research grant.
\end{acknowledgements}
 
\section*{Appendix}
The conditions for consistency and asymptotic
normality of the MCE for parameters of stationary
fractional Riesz-Bessel type random fields given in
\citet{Anh:2004} are specified below for random fields on  $\mathbb{Z}^{2}.$

\begin{description}
  \item[\bf A1.] Let $Y_{t_{1},t_{2}},\ \boldsymbol{t}=(t_{1}, t_{2}) \in  \mathbb{Z}^{2},$
   be  a real-valued measurable stationary Gaussian random field  with zero mean and a spectral density
   $f(\boldsymbol{\lambda},\boldsymbol{\theta}),$  where
   $\boldsymbol{\lambda}=(\lambda_{1},\lambda_{2})\in [-\pi,\pi]^{2},$
   $\boldsymbol{\theta} \in \Theta,$ and $\Theta$ is a compact set.
    Assume that  $\boldsymbol{\theta}_{0} \in \mbox{int}(\Theta ),$ where $\boldsymbol{\theta}_{0}$ is the true value of the
    parameter vector $\boldsymbol{\theta}.$

  \item[\bf A2.] If $\boldsymbol{\theta_{1}} \neq \boldsymbol{\theta_{2}}$ then  $f(\boldsymbol{\lambda}, \boldsymbol{\theta_{1}})
  \neq f(\boldsymbol{\lambda}, \boldsymbol{\theta_{2}})$ for almost all $\boldsymbol{\lambda} \in [-\pi,\pi]^{2}$
  with respect to the Lebesgue measure.

  \item[\bf A3.] There exists a nonnegative function $w(\boldsymbol{\lambda})$,  $\boldsymbol{\lambda} \in [-\pi,\pi]^{2}$, such that
      \begin{enumerate}
        \item $w(\boldsymbol{\lambda})$ is symmetric about $(0,0),$ i.e. $w(\boldsymbol{\lambda})=w(-\boldsymbol{\lambda});$
        \item $w(\boldsymbol{\lambda}) f(\boldsymbol{\lambda},\boldsymbol{\theta}) \in L_{1}\left([-\pi,\pi]^{2}\right)$
        for all $\boldsymbol{\theta} \in \Theta.$ 
      \end{enumerate}
  \item[\bf A4.] The derivatives $\nabla_{\boldsymbol{\theta}}\Psi(\boldsymbol{\lambda},\boldsymbol{\theta})$
  exist and  it is legitimate to differentiate
  under the integral sign in equation (\ref{equation_psi}), i.e.
        \begin{equation}
            \nabla_{\boldsymbol{\theta}} \displaystyle\int_{[-\pi,\pi]^{2}}
     \Psi(\boldsymbol{\lambda},\boldsymbol{\theta}) w(\boldsymbol{\lambda})\,\, d\boldsymbol{\lambda} =
            \displaystyle\int_{[-\pi,\pi]^{2}} \nabla_{\boldsymbol{\theta}}
     \Psi(\boldsymbol{\lambda},\boldsymbol{\theta}) w(\boldsymbol{\lambda})\,\, d\boldsymbol{\lambda} = 0. \nonumber
        \end{equation}

  \item[\bf A5.] For all $\boldsymbol{\theta }\in \Theta$ the function $w(\boldsymbol{\lambda})$, $
  \boldsymbol{\lambda}\in[-\pi,\pi]^{2},$
   satisfies
      $$f(\boldsymbol{\lambda},\boldsymbol{\theta_{0}})w(\boldsymbol{\lambda})
      \log \Psi(\boldsymbol{\lambda},\boldsymbol{\theta}) \in L_{1}\left( [-\pi,\pi]^{2}\right) \cap L_{2}\left( [-\pi,\pi]^{2}\right).$$

  \item[\bf A6.] There exists a function $\upsilon(\boldsymbol{\lambda}),$ $\boldsymbol{\lambda}\in [-\pi,\pi]^{2},$
   such that
        \begin{enumerate}
            \item the function $h(\boldsymbol{\lambda},\boldsymbol{\theta})=\upsilon(\boldsymbol{\lambda})
            \log\Psi(\boldsymbol{\lambda},\boldsymbol{\theta})$ is uniformly continuous on $[-\pi,\pi]^{2} \times \Theta;$
            \item $f(\boldsymbol{\lambda},\boldsymbol{\theta}_{0}) w(\boldsymbol{\lambda}) / \upsilon(\boldsymbol{\lambda}) \in
             L_{1}([-\pi,\pi]^{2}) \cap L_{2}([-\pi,\pi]^{2}).$
\end{enumerate}
\item[\bf A7.] The function $\Psi(\boldsymbol{\lambda},\boldsymbol{\theta})$ is twice differentiable on  $\Theta$ and
        \begin{enumerate}
            \item $f(\boldsymbol{\lambda},\boldsymbol{\theta}_{0}) w(\boldsymbol{\lambda}) \frac{\partial^{2}}
            {\partial \theta_{i}\partial \theta_{j}} \log \Psi(\boldsymbol{\lambda},\boldsymbol{\theta})
             \in L_{1}([-\pi,\pi]^{2}) \bigcap L_{2}([-\pi,\pi]^{2}),$ for all $i,j,$ and $\boldsymbol{\theta} \in \Theta;$
            \item $f(\boldsymbol{\lambda},\boldsymbol{\theta}_{0}) w(\boldsymbol{\lambda}) \frac{\partial}
            {\partial \theta_{i}} \log \Psi(\boldsymbol{\lambda},\boldsymbol{\theta}) \in L_{k}([-\pi,\pi]^{2}),$
            for all $i,$ $\boldsymbol{\theta} \in \Theta,$   and $k\geq 1.$
        \end{enumerate}

    \item[\bf A8.] The matrices $\mathbf{S}(\boldsymbol{\theta})=(s_{ij}(\boldsymbol{\theta}))$      and $\mathbf{A}(\boldsymbol{\theta})=(a_{ij}(\boldsymbol{\theta}))$ with the elements defined by {\rm(\ref{sij})} and {\rm(\ref{aij})} are positive definite. 
    
    \item[\bf A9.] The spectral density $f(\boldsymbol{\lambda},\boldsymbol{\theta}),$
    the weight function $w(\boldsymbol{\lambda}),$ and the function
    $ \frac{\partial}{\partial \theta_{i}} \log \Psi(\boldsymbol{\lambda},\boldsymbol{\theta})$  are such that for all $i$ and $\boldsymbol{\theta}\in \Theta:$
$$T\displaystyle\int_{[-\pi,\pi]^2}
(EI^{*}_{T}(\boldsymbol{\lambda})-f(\boldsymbol{\lambda},\boldsymbol{\theta}_0)
) w(\boldsymbol{\lambda}) \frac{\partial}{\partial \theta_{i}} \log
\Psi(\boldsymbol{\lambda},\boldsymbol{\theta}) \,\,
d\boldsymbol{\lambda} \longrightarrow 0, \quad\text{as} \quad
T\longrightarrow \infty.$$
\end{description}

\end{document}